\definecolor{darkred}{RGB}{100,0,0}
\definecolor{darkgreen}{RGB}{0,100,0}
\definecolor{darkblue}{RGB}{0,0,150}
\newtheorem{thm}{Theorem}
\newtheorem{prp}{Proposition}
\newtheorem{lem}{Lemma}
\newtheorem{cor}{Corollary}
\theoremstyle{remark}
\newtheorem{rem}{Remark}
\def\beq{\begin{equation}} % \setcounter{equation}{1}}
\def\eeq{\end{equation}}
\def\beqn{\begin{eqnarray*}}
\def\eeqn{\end{eqnarray*}}
\def\Bitem{\begin{itemize}\setlength{\itemsep}{.2in}}
\def\bitem{\begin{itemize}\setlength{\itemsep}{.05in}}
\def\eitem{\end{itemize}}
\def\Benum{\begin{enumerate}\setlength{\itemsep}{.2in}}
\def\benum{\begin{enumerate}\setlength{\itemsep}{.05in}}
\def\eenum{\end{enumerate}}
\def\bmult{\begin{multline*}}
\def\emult{\end{multline*}}
\def\bcenter{\begin{center}}
\def\ecenter{\end{center}}
\def\bframe{\begin{frame}}
\def\eframe{\end{frame}}
\newcommand{\thmref}[1]{Theorem~\ref{thm:#1}}
\newcommand{\prpref}[1]{Proposition~\ref{prp:#1}}
\newcommand{\corref}[1]{Corollary~\ref{cor:#1}}
\newcommand{\lemref}[1]{Lemma~\ref{lem:#1}}
\newcommand{\secref}[1]{Section~\ref{sec:#1}}
\DeclareMathOperator{\diam}{diam}
\def\cG{\mathcal{G}}
\def\cH{\mathcal{H}}
\def\cK{\mathcal{K}}
\def\cP{\mathcal{P}}
\def\cQ{\mathcal{Q}}
\def\cS{\mathcal{S}}
\def\bbI{\mathbb{I}}
\def\bbR{\mathbb{R}}
\def\bbZ{\mathbb{Z}}
\newcommand{\E}{\operatorname{\mathbb{E}}}
\renewcommand{\P}{\operatorname{\mathbb{P}}}
\newcommand{\Var}{\operatorname{Var}}
\newcommand{\Cov}{\operatorname{Cov}}
\newcommand{\<}{\langle}
\renewcommand{\>}{\rangle}
\def\eps{\varepsilon}
\def\1{\mathbbm{1}}
\newcommand{\IND}[1]{\bbI\{ #1 \}}
\definecolor{purple}{rgb}{0.4,.1,.9}
\begin{document}
\thispagestyle{empty}

\title{Remember the Curse of Dimensionality: \\
The Case of Goodness-of-Fit Testing in Arbitrary Dimension}

\author{
Ery Arias-Castro\footnote{ Department of Mathematics, University of California, San Diego}
\and
Bruno Pelletier\footnote{ D\'epartement de Math\'ematiques, IRMAR -- UMR CNRS 6625, Universit\'e Rennes II}
\and
Venkatesh Saligrama\footnote{ Department of Electrical and Computer Engineering, Boston University}
}
\date{}

\maketitle

\begin{abstract}
Despite a substantial literature on nonparametric two-sample goodness-of-fit testing in arbitrary dimensions spanning decades, there is no mention there of any curse of dimensionality.  
Only more recently \citet{ramdas2015decreasing} have discussed this issue in the context of kernel methods by showing that their performance degrades with the dimension even when the underlying distributions are isotropic Gaussians.
We take a minimax perspective and follow in the footsteps of \citet{ingster1987minimax} to derive the minimax rate in arbitrary dimension when the discrepancy is measured in the $L^2$ metric.  That rate is revealed to be nonparametric and exhibit a prototypical curse of dimensionality.  
We further extend Ingster's work to show that the chi-squared test achieves the minimax rate.  Moreover, we show that the test can be made to work when the distributions have support of low intrinsic dimension.  Finally, inspired by  \citet{ingster2000adaptive}, we consider a multiscale version of the chi-square test which can adapt to unknown smoothness and/or unknown intrinsic dimensionality without much loss in power.  
\end{abstract}

\section{Introduction} \label{sec:intro}
We consider the multivariate two-sample goodness-of-fit testing problem in a nonparametric setting. In the two-sample goodness-of-fit problem we are given two datasets, $X_1,X_2,\ldots,X_m$ IID with unknown distribution $F$ and $Y_1, \dots, Y_n$ IID with unknown distribution $G$, and the goal is to determine whether or not $F=G$. 
In the classical statistics literature, this question has been studied extensively in univariate setting, with well-known tests such as the Kolmogorov-Smirnov test \cite{kolmogorov1933sulla, smirnov1939estimation}, the number-of-runs test \cite{MR0002083}, or the longest-run test \cite{MR0004453}.
Work on multivariate two-sample tests can be traced to~\cite{hotelling1951}. 

Comparatively, much less work has been devoted to the multivariate setting.
High-dimensional two-sample goodness-of-fit problems arise naturally in medical, social and financial applications. For instance, in medical applications, the behavior or response to a drug can manifest in terms of a diverse set of symptoms, and the goal is to detect differences among these multitude of symptoms. Cross-cultural differences in social sciences are often measured with respect to a number of different behavioral factors. Finally, inferring differences/changes in stock market trends is generally based not only on macro/microeconomic but also a number of prevailing political and social factors.
There is, therefore, a need for understanding the fundamental achievable limits of multivariate two-sample goodness-of-fit testing.  Here we are mostly interested in the nonparametric situation.

Despite a growing literature on nonparametric two-sample goodness-of-fit testing in arbitrary dimensions, there is little mention of a curse of dimensionality. This confusion is compounded by the fact that parametric rates are derived in some of these works.
 \citet{ramdas2015decreasing, ramdas2015adaptivity} have challenged these claims by showing that the performance of some recent popular kernel methods (introduced below) degrades as the dimension increases, even when the underlying distributions are as simple as isotropic Gaussians.
It turns out, as we elaborate upon below, that these parametric rates are obtained for directional alternatives. While analyzing directional alternatives could be meaningful in some cases such as the one-sample problem and other special cases where prior information is available, it does not appear to be meaningful in the context of a nonparametric two-sample problem. 
In contrast, taking a classical minimax perspective reveals that a parametric rate is not achievable, as already established by \citet{ingster1987minimax} in the one-dimensional setting.

In this paper, we extend \citeauthor{ingster1987minimax}'s results to arbitrary dimensions.  We assume, as he did, that the underlying distributions have H\"older smooth densities supported on the unit hypercube, and derive the minimax rate with respect to the $L^2$ metric.  This rate is not only nonparametric, but exhibits a prototypical curse of dimensionality. In the process, we show that the chi-squared test with a proper choice of bin size achieves the minimax rate. 
Furthermore, we show that, with proper tuning, the chi-squared test is also able to achieve the minimax rate over densities with lower-dimensional support.  
Finally, inspired by \citet{ingster2000adaptive}, we consider a multiscale version of the chi-squared test which is essentially parameter-free and can adapt to unknown smoothness and/or unknown intrinsic dimensionality without much loss in power.

%Two-sample goodness-of-fit testing is an important line of research in statistics, with well-known tests such as the Kolmogorov-Smirnov test \cite{kolmogorov1933sulla,smirnov1939estimation}, based on comparing the empirical distribution functions; the number-of-runs test \cite{MR0002083}; or the longest-run test \cite{MR0004453}. 

\subsection{The literature}
We can broadly classify the existing literature on the nonparametric two-sample goodness-of-fit tests into two categories: (a) Tests based on metrics; (b) Tests based on graph constructions. 

\subsubsection{Tests based on metrics} \label{sec:intro-metrics}
Recall that we have two independent samples, $X_1, \dots, X_m$ IID with distribution $F$ and $Y_1, \dots, Y_n$ IID with distribution $G$, where $F$ and $G$ are Borel measures on $\bbR^d$.  
\citet{bickel1969distribution} proposes a direct extension of the Kolmogorov-Smirnov test.  His proposal is a special case of tests of the form $\gamma_\cH(F_m, G_n)$, where $F_m$ and $G_n$ denote the empirical distributions of the $X$ and $Y$ samples, respectively, and
\beq\label{gamma}
\gamma_\cH(P, Q) := \sup_{h \in \cH} \textstyle \big| \int h\,{\rm d}P - \int h\,{\rm d}Q \big|,
\eeq
for an appropriate class of test functions $\cH$.  By varying $\cH$, besides the Kolmogorov distance, one can get the total variation distance and the Wasserstein distance, among others. \citet{sriperumbudur2010hilbert} provide a comprehensive overview.  
The metric \eqref{gamma} may be difficult to compute in general, even for discrete measures, because of the supremum over $\cH$.  However, by taking $\cH$ to be a reproducible kernel Hilbert space (RKHS), as advocated in \cite{berlinet2011reproducing, gretton2006kernel, smola2007hilbert}, then
\beq\label{gamma-rkhs}
\begin{split}
\gamma_\cH(P, Q) 
&= \int_{}\int \psi(x,y) P({\rm d}x) P({\rm d}y) + \int_{} \int \psi(x,y) Q({\rm d}x) Q({\rm d}y) \\
&\quad - 2 \int_{} \int \psi(x,y) P({\rm d}x) Q({\rm d}y),
\end{split}\eeq
where $\psi$ denotes the kernel defining $\cH$.
The sample version is the following computationally-friendly $U$-statistic
\beq\begin{split}
\gamma_\cH(F_m, G_n) 
&= \frac2{m(m-1)} \sum_{i=1}^{m-1} \sum_{j = i+1}^m \psi(X_i, X_j) + \frac2{n(n-1)} \sum_{i=1}^{n-1} \sum_{j = i+1}^n \psi(Y_i, Y_j) \\
&\quad - \frac2{mn} \sum_{i=1}^{m} \sum_{j = 1}^n \psi(X_i, Y_j).
\end{split}\eeq
When $\psi$ is bounded, this is a $(m \wedge n)^{1/2}$-consistent estimator for $\gamma_\cH(F, G)$.  The question then becomes whether $\gamma_\cH(F, G)$ is a true metric, a question addressed in \cite{sriperumbudur2010hilbert}.  

This line of work is intimately related to that of \citet{zinger1992characterization}.  The notion of N-distance that developed out of this work \cite{klebanov2005n} is exactly of the form \eqref{gamma-rkhs}, therefore coinciding with the pseudo-metric $\gamma_\cH$ when $\cH$ is an RKHS.
Applications to the two-sample problem are developed in \cite{szekely2004testing,bakshaev2009goodness}.

\subsubsection{Tests based on graph constructions} \label{sec:intro-graphs}
In a disjoint line of work, another class of tests has developed based on various graph constructions.  This goes back, at least, to the work of \citet{friedman1973nonparametric}.  There, for each point in the combined sample the number of $X$'s among its $K$-nearest neighbors is recorded.  This results in two distributions on $\{0,1,\dots,K\}$, $\hat\pi_X$ and $\hat\pi_Y$, where $\hat \pi_X(k)$ (resp.~$\hat \pi_Y(k)$) denotes the number of $X$'s (resp.~$Y$'s) having exactly $k$ other $X$'s among their $K$-nearest neighbors.  The distributions $\hat \pi_X$ and $\hat \pi_Y$ are then compared in some way, resulting in a test.  See also \cite{rogers1976some, hall2002permutation}.
Although it does not cover all the possibilities, many of the subsequent proposals in this line of work can be framed as follows.  Let $t = m+n$ denote the total sample size.  Let $\cG$ be a directed graph with node set $\{1, \dots, t\}$ indexing the combined sample, $\{Z_1, \dots, Z_{t}\}$ with $Z_k = X_k$ if $k \le m$ and $Z_k = Y_{k - m}$ if $k \ge m+1$.  We write $i \to j$ when node $i$ points to node $j$ in $\cG$.  Consider rejecting for {\em small} values of
\beq\label{crossmatch}
\chi_\cG = \# \{i \to j : i \le m, j \ge m+1\} + \# \{j \to i : i \le m, j \ge m+1\},
\eeq
which is the number of neighbors in the graph from different samples.
If the graph $\cG$ is the $K$-nearest neighbor graph --- where $i \to j$ if $Z_j$ is among the $K$-nearest neighbors of $Z_i$ in Euclidean distance --- and we assume that all the $Z$'s are distinct, then the resulting test is that of \citet{schilling1986multivariate}, a special case of the general approach of \citet{friedman1973nonparametric}. 
If the graph $\cG$ is a minimum spanning tree (starting with the complete graph weighted by the Euclidean distances), then the resulting test is the multivariate number-of-runs test of \citet{friedman1979multivariate}.  
If the graph $\cG$ is a minimum distance non-bipartite matching, then the resulting test is that of \citet{rosenbaum2005exact}.
We refer the reader to \cite{bhattacharya2015general} for additional references and recent theoretical developments

\subsection{The curse of dimensionality} \label{sec:intro-curse}
Although the setting is nonparametric, surprisingly, there is no discussion of a curse of dimensionality in this literature.  
We argue here --- and develop this further in the rest of the paper --- that there is a bonafide curse of dimensionality.  Indeed, suppose that $F$ and $G$ are supported on the unit hypercube $[0,1]^d$ and assume furthermore that they have densities $f$ and $g$ with respect to the Lebesgue measure that can be taken such that $f \le C$ and $g \le C$ for some constant $C < \infty$.  
Fix $\eps \in (0,1/2)$.  Then the chances of {\em not} observing any $X$ in $[\eps, 1-\eps]^d$ out of a sample of size $m$ are
\begin{align}
\P(\forall i \in [m]: X_i \notin [\eps, 1-\eps]^d)
&= \P(X_1 \notin [\eps, 1-\eps]^d)^m \\
&= \left(\int_{[0,1]^d\setminus [\eps, 1-\eps]^d} f(x) {\rm d}x\right)^m \\
&\ge \left(1 - C (1-2\eps)^d\right)^m \\
&\to 1, \text{ when } m (1-2\eps)^d \to 0.
\end{align}
The condition on $m$ and $d$ holds, for example, when $d \gg \log m$.  (Of course, the same derivations apply for the $Y$ sample as well.)  We conclude that, when the dimension is a little more than logarithmic in the sample size, the inner hypercube $[\eps, 1-\eps]^d$ is very likely empty of data points.  This is prototypical of a curse of dimensionality and the consequences are completely standard: if there are no data points in $[\eps, 1-\eps]^d$ (for example, $[1/4, 3/4]^d$ when $\eps = 1/4$) then, obviously, we cannot distinguish $f$ and $g$ if they agree outside of that hypercube.  And typical smoothness assumptions on $f$ and $g$ (made explicit later) allow for this to happen.

\subsection{Directional alternatives and minimaxity} \label{sec:param_rate_lit}
Recall that $t = m+n$ denotes the total sample size and consider an asymptotic setting where $m \asymp n \asymp t \to \infty$.  In this context, many of the various tests proposed in the literature just cited are shown to be consistent against all alternatives, and sometimes claimed to be $t^{1/2}$-consistent, as in  \cite{hall2002permutation,schilling1986multivariate} and also implicitly in \cite{sriperumbudur2010hilbert}, among others. Furthermore, \cite{hall2002permutation} places other conventional distributional tests including Mann-Whitney rank test, Kolmogorov-Smirnov and Cramer-von Mises tests in the same league as their own permutation tests. Note that $t^{1/2}$ is the parametric rate in this context.
We have already mentioned the work of \citet{ramdas2015adaptivity,ramdas2015decreasing}, who challenge these claims by observing and establishing the fact that the performance of the two-sample test based on the Gaussian kernel (among others) can degrade with the dimension even in (parametric) location models.
In the process, \citeauthor{ramdas2015decreasing} question the choice of alternatives made in various papers advocating for kernel-based methods.

As \citet{hall2002permutation} argue, these parametric rates must be understood in the directional sense.  As before, let $f$ and $g$ denote the densities of $F$ and $G$ with respect to the Lebesgue measure.  A directional alternative is of the form $g = f + \eps_t h$ (where necessarily $\int h\, {\rm d}\nu= 0$) and a test of the form $\{T \ge \tau_t\}$ is $t^{1/2}$-consistent in the direction of $h$ if $\P_0(T \ge \tau_t) \to 0$ and $\P_{\eps_t}(T \ge \tau_t) \to 1$ when $\eps_t \gg 1/\sqrt{t}$, where $\P_\eps$ denotes here the law when the samples come from $f$ and $g = f + \eps h$, with $h$ given.
The tests in \cite{hall2002permutation,schilling1986multivariate} are shown to be $t^{1/2}$-consistent in all directions under additional (but mild) regularity assumptions.  

We find it hard to motivate directional consistency, particularly for nonparametric two-sample goodness-of-fit problems where neither of the distributions $F,\,G$ are known. Unlike one-sample tests where local alternatives can be described with respect to the known distribution, for two-sample tests it is somewhat unclear as to how to characterize such local alternatives. Furthermore, a large-sample analysis is meant to elucidate what happens in practice when the samples are sufficiently large.  With that in mind, what does it mean for the direction $h$ to remain fixed as $m,n \to \infty$?  In addition this notion of performance can also be misleading.  First, the rate appears parametric in a typical nonparametric setting.  Second, in the present setting, it hides the fact that there is a bonafide (in fact, prototypical) curse of dimensionality, as argued earlier.  We turn to the notion of minimax performance, which is much more commonly used to quantify the hardness of a given statistical problem.  
(\citet{baraud2003adaptive} provide a deeper discussion of directional alternatives from a minimax perspective.)

\subsection{Our contribution}
Our main purpose here is to clarify the situation by contributing the following:
\bitem
\item {\em Minimax lower bound.}
%This is true even in dimension $d=1$ where, for example, the minimax rate is known to be $O(m \wedge n)^{-2/5}$ when $F$ and $G$ have Lipschitz densities with respect to the Lebesgue measure.  
We derive a minimax lower bound under H\"older regularity in arbitrary dimension following the work of \citet{ingster1987minimax}, who considers the one-sample setting in dimension $d=1$.
We do so for the one-sample and two-sample settings.  
In each case, the minimax rate exhibits a typical curse of dimensionality.  
%We briefly discuss the case where the regularity is unknown based on the work of \citet{ingster2000adaptive}.
\item {\em Chi-squared test.}
The minimax performance of the various tests mentioned earlier is, as far as we know, unknown.  
Extending the work of \citet{ingster1987minimax}, we show that a properly tuned chi-squared test achieves the minimax rate.  We do so for the one-sample and two-sample settings.    
\item {\em Low-intrinsic dimension.}
As is now standard in high-dimensional settings, we consider the case where the supports of $F$ and $G$ have low intrinsic dimension.  We argue that the most relevant setting is when $F$ and $G$ are supported on the same surface and, in this context, show that a properly tuned chi-squared test achieves the minimax rate.
\item {\em Adaptation to unknown smoothness and/or intrinsic dimensionality.}
As in the work of \citet{ingster2000adaptive}, we propose a multiscale version of the chi-squared test which is essentially parameter-free and is able to adapt to an unknown degree of  smoothness and/or intrinsic dimensionality without much loss in power.
\eitem  

\subsection{Notation}

For two vectors $a = (a_1,\dots,a_d)$ and $b = (b_1,\dots,b_d)$ in $\bbR^d$, with $a_j \le b_j$ for all $j$, define the following hyperrectangle
\beq\label{hyperrectangle}
[a,b] := \prod_{j=1}^d [a_j,b_j].
\eeq

\section{The one-sample goodness-of-fit problem}
\label{sec:one}

We start by extending the work of \citet{ingster1987minimax}, who considers the one-sample setting in dimension $d=1$, to an arbitrary dimension $d \ge 1$.

In the one-sample setting, we have at our disposal one sample $X_1, \dots, X_m$ IID with distribution $F$, with density $f$ with respect to the Lebesgue measure $\nu$ on the unit hypercube, $[0,1]^d$.  
The goal is to test
\beq
H_0 : f = f_0 \quad \text{versus} \quad H_1 : \delta(f, f_0) \ge \eps,
\eeq
for some pseudo-metric $\delta$.  (Of course, $f = f_0$ is understood modulo a set of $\nu$-measure zero.) 
We will work throughout with square-integrable densities, for which the $L^2$-metric is appropriate,
\beq\label{L2}
\delta(f,g) = \|f -g\|_2 := \sqrt{\int (f-g)^2 {\rm d}\nu}.
\eeq
Following \citeauthor{ingster1987minimax}, we focus on the case where $f_0$ is the uniform distribution on $[0,1]^d$, knowing that the arguments extend without pain to almost any other choice of null distribution supported on the hypercube.

\subsection{Risk and minimax lower bound}
\label{sec:intrinsicer1}

A test, $\phi$, is a Borel measurable function of the data --- meaning $\bbR^d \times \cdots \bbR^d$, $m$ times, when $m$ is the sample size --- with values in $[0,1]$, which gives the probability of rejecting the null hypothesis.
Let $\cH$ denote a class of real-valued functions on $\bbR^d$ and let $\delta$ be a pseudo-metric on $\cH$.  For $\eps > 0$, define the worst-case risk of a test $\phi$ as
\beq\label{risk1}
R_\eps^{(m)}(\phi; f_0; \cH) = \E^{(m)}_{f_0}[\phi] + \sup\Big\{\E^{(m)}_{f}[1-\phi] : f \in \cH; \delta(f,f_0) \ge \eps\Big\},
\eeq
The first (resp.~second) supremum is the largest probability of type I (resp.~II) error.  The minimax risk is
\beq\label{minimax-risk1}
R_\eps^{(m)}(f_0; \cH) = \inf_\phi R_\eps^{(m)}(\phi; f_0; \cH).
\eeq

In nonparametric settings such as the present one, it is customary to make regularity assumptions on the underlying distributions.  A typical assumption is that of smoothness \cite{ingster1987minimax,ingster1993asymptotically}.  We follow \citeauthor{ingster1987minimax} and work with H\"older regularity.  
For $s > 0$, let $\lfloor s \rfloor$ denote the largest integer strictly smaller than $s$.  Let $\cH_s^d(L)$ denote the class of functions $f : [0,1]^d \mapsto \bbR$ such that $f$ has a derivative of order $\lfloor s \rfloor$ which satisfies
\beq\label{H}
\big|f^{\lfloor s \rfloor}(x) - f^{\lfloor s \rfloor}(y)\big| \le L \|x - y\|^{s - \lfloor s \rfloor}, \quad \forall x,y \in [0,1]^d. 
\eeq
For convenience,\footnote{ It can be shown that a density $f$ satisfying \eqref{H} satisfies \eqref{H-deriv} but with $L$ replaced by a larger constant depending only on $(s,d,L)$.  This is not true of just any function satisfying \eqref{H} since that condition alone does not imply a uniform bound on the supnorm.} 
we add the assumption that 
\beq\label{H-deriv}
\|f^{(s')}\|_\infty \le L, \quad \forall s' \in \{0, \dots, \lfloor s \rfloor\}.
\eeq 
For example, the functions in $\cH^1(L)$ are Lipschitz with constant $L$.
A straightforward extension of the lower bound derived by \citeauthor{ingster1987minimax} leads to the following lower bound on the minimax rate for the one-sample problem in dimension $d$.  
Recall that we work with the uniform distribution, where $f_0 \equiv 1$ on $[0,1]^d$, and leave $f_0$ implicit in \eqref{risk1} and \eqref{minimax-risk1}.

\begin{thm} \label{thm:lower1}
For the one-sample problem under known H\"older regularity, there is a constant $c > 0$ depending only on $(s,d,L)$ such that
\beq\label{lower1-1}
R_\eps^{(m)}(\cH_s^d(L)) \ge 1/2, \quad \text{if } \eps \le c\, m^{-2s/(4s+d)}.
\eeq
\end{thm}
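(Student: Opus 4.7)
The plan is to follow Ingster's mixing (prior-based Le Cam) argument adapted to dimension $d$. Recall that the minimax risk is bounded below by the Bayes risk against any prior on the alternative, and if we put a prior $\pi$ supported on $\{f : f \in \cH_s^d(L), \|f-f_0\|_2 \ge \eps\}$ with mixture law $\bar P = \int P_f^{(m)}\,\pi(df)$, then $R_\eps^{(m)}(\cH_s^d(L)) \ge 1 - \tfrac12 \|\bar P - P_0^{(m)}\|_{\rm TV} \ge 1 - \tfrac12 \sqrt{\E_0[\bar L^2] - 1}$, where $\bar L$ is the averaged likelihood ratio. So the goal reduces to exhibiting a prior for which $\E_0[\bar L^2] = 1 + o(1)$ while $\eps \asymp m^{-2s/(4s+d)}$.

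I would build the prior in the standard ``Ingster bump'' way. Fix a smooth nonnegative function $\psi \in C^\infty(\bbR^d)$ supported in $[0,1]^d$ with $\int \psi \,d\nu = 0$ and $\|\psi\|_2 > 0$ (for instance a tensor product of centered one-dimensional bumps). Partition $[0,1]^d$ into $N = h^{-d}$ congruent subcubes $C_k$ of side $h$ (with $h^{-1}$ integer), centered at $x_k$, and set $\eta_k(x) = a\, \psi((x-x_k)/h)$, so the $\eta_k$ have pairwise disjoint supports and $\int \eta_k\, d\nu = 0$. For $\omega \in \{-1,+1\}^N$, define $f_\omega = f_0 + \sum_k \omega_k \eta_k$, and put the uniform prior on $\omega$. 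Two verifications are immediate from the construction: (i)~the Hölder seminorm of $\eta_k$ at order $s$ scales as $a h^{-s}$, so choosing $a \le c_1 L h^s$ with $c_1$ depending only on $\psi$ guarantees $f_\omega \in \cH_s^d(L)$, and (ii)~by disjoint supports $\|f_\omega - f_0\|_2^2 = N a^2 h^d \|\psi\|_2^2 \asymp a^2$, so $\|f_\omega - f_0\|_2 \ge c_2 a$ for some $c_2 > 0$.

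The crux is the chi-square computation. Since $f_0 \equiv 1$ and the $\eta_k$ have disjoint supports with zero integral,
\begin{equation*}
\E_0\!\left[\frac{f_\omega(X_1) f_{\omega'}(X_1)}{f_0(X_1)^2}\right]
 = 1 + \sum_k \omega_k \omega'_k \int \eta_k^2\, d\nu
 = 1 + \beta \sum_k \omega_k \omega'_k,
\end{equation*}
with $\beta := a^2 h^d \|\psi\|_2^2$. Independence across the sample and over the coordinates of $\omega,\omega'$ yields
\begin{equation*}
\E_0[\bar L^2]
 = \E_{\omega,\omega'}\!\left[\left(1 + \beta\sum_k \omega_k \omega'_k\right)^{\!m}\right]
 \le \E_{\omega,\omega'}\!\left[e^{m\beta \sum_k \omega_k \omega'_k}\right]
 = \bigl(\cosh(m\beta)\bigr)^{N}
 \le \exp\!\bigl(\tfrac12 N m^2 \beta^2\bigr).
\end{equation*}
This is $1 + o(1)$ as soon as $N m^2 \beta^2 = h^{-d} m^2 a^4 h^{2d} \|\psi\|_2^4 \lesssim 1$, i.e.\ $m^2 a^4 h^d \lesssim 1$.

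Finally I would balance the two constraints. Setting $a = c_1 L h^s$ saturates the Hölder condition and makes the separation $\asymp h^s$; the chi-square condition becomes $m^2 h^{4s+d} \lesssim 1$, i.e.\ $h \asymp m^{-2/(4s+d)}$, which forces $h^{-1}$ to be (essentially) integer by taking it as the nearest integer, causing no harm in the rate. Substituting back gives $\eps \asymp a \asymp h^s \asymp m^{-2s/(4s+d)}$, which is the announced rate with a constant $c$ depending only on $(s,d,L,\psi)$, and $\psi$ is itself a fixed choice depending only on $(s,d)$. The main obstacle, and the place where care is required, is the chi-square bound: in dimension one (Ingster's original argument) one already sees the tension between $N$ growing and $\beta$ shrinking, but in dimension $d$ the $N \asymp h^{-d}$ scaling is what produces the curse $d$ in the exponent $4s+d$, so one must keep careful track of how the $h^d$ factors from $N$, from $\int\eta_k^2$, and from the Hölder rescaling combine. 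Once that accounting is done cleanly, the rest of the argument is a direct generalization of the one-dimensional case.
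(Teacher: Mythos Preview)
Your proposal is essentially identical to the paper's own proof: the same Ingster-style prior built from Rademacher-signed disjoint bumps on a grid, the same second-moment (chi-square) computation leading to a $\cosh$ product, and the same balancing of the H\"older constraint against the chi-square constraint to obtain $h\asymp m^{-2/(4s+d)}$ and $\eps\asymp m^{-2s/(4s+d)}$. Two small slips to fix: you call $\psi$ ``nonnegative'' while also requiring $\int\psi=0$ (drop ``nonnegative''; a mean-zero bump is what you want), and you should note explicitly that $f_\omega\ge 0$ holds once $a\|\psi\|_\infty\le 1$, which is automatic for $h$ small since $a\asymp h^s$.
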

The proof is only provided for pedagogical reasons. 
This result quantifies the curse of dimensionality presented in \secref{intro-curse}.  In particular, we can see again that if $d \gg \log m$, the upper bound on $\eps$ does not even approach~0 even as $m \to \infty$.

\subsection{The chi-squared test}
\label{sec:tests1}

In the one-sample setting in dimension $d=1$, when the H\"older regularity $s$ is known, \citet{ingster1987minimax} establishes that the chi-squared test with bin size\footnote{ We only consider partitions into bins of equal size.} $1/\kappa \asymp m^{2/(4s+1)}$ achieves the minimax rate.  

In dimension $d$, the chi-squared test rejects for large values of 
\beq\label{Gamma1}
\Gamma_\kappa^{\rm one} := \sum_{k \in [\kappa]^d} (M_{k,\kappa} - m \kappa^{-d})^2,
\eeq
where, for an integer $\kappa \ge 1$ and $k \in [\kappa]^d$, we define the bin counts
\beq
M_{k,\kappa} = \#\big\{i \in [m] : X_i \in ((k-1)/\kappa, k/\kappa]\big\}.
\eeq
When $s$ is known, we set the bin size to be $1/\kappa$, where
\beq\label{kappa}
\kappa = \kappa(s,d) := \lfloor m^{2/(4s+d)} \rfloor.
\eeq
Note that this choice of bin size depends on the degree of smoothness.

\begin{thm} \label{thm:chi1}
In the one-sample problem under known H\"older regularity, consider the chi-squared test $\phi_{\kappa, \tau} := \IND{\Gamma_\kappa^{\rm one} > \tau}$.  
There are constants $c_1$ depending only on $(d,L)$ and $c_2$ depending only on $(s,d,L)$ such that, by choosing $\kappa$ as in \eqref{kappa} and $\tau = m + a m \kappa^{-d/2}$ with $a \ge 1$,
\beq
\bar R_\eps^{(m)}(\phi_{\kappa, \tau}; \cH_s^d(L)) \le c_1/a^2, \quad \text{if }\eps \ge c_2 a m^{-2s/(4s+d)}.
\eeq
\end{thm}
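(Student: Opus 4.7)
The plan is to bound the type I and type II errors of $\phi_{\kappa,\tau}$ separately, in both cases via Chebyshev's inequality applied to $\Gamma_\kappa^{\rm one}$.

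For type I, observe that under $H_0$ the bin counts $(M_{k,\kappa})_{k \in [\kappa]^d}$ form a multinomial vector with $\E_{f_0}[M_{k,\kappa}] = m\kappa^{-d}$, so each summand of $\Gamma_\kappa^{\rm one}$ is centered and $\E_{f_0}[\Gamma_\kappa^{\rm one}] = \sum_k \mathrm{Var}(M_{k,\kappa}) = m(1-\kappa^{-d}) \le m$. Writing each $M_{k,\kappa} = \sum_{i=1}^m \IND{X_i \in B_k}$ with $B_k = ((k-1)/\kappa, k/\kappa]$, a direct expansion of $\mathrm{Var}_{f_0}(\Gamma_\kappa^{\rm one})$ using mixed multinomial fourth moments yields $\kappa^d$ diagonal contributions of total order $m^2\kappa^{-d}$, while the off-diagonal covariances between $(M_k - m\kappa^{-d})^2$ and $(M_l - m\kappa^{-d})^2$ are non-positive by the negative correlation of the multinomial. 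Thus $\mathrm{Var}_{f_0}(\Gamma_\kappa^{\rm one}) = O(m^2\kappa^{-d})$, and Chebyshev delivers $\P_{f_0}(\Gamma_\kappa^{\rm one} > m + am\kappa^{-d/2}) = O(1/a^2)$.

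For type II, fix $f \in \cH_s^d(L)$ with $\|f - 1\|_2 \ge \eps$, set $p_{k,\kappa} = \int_{B_k} f\,{\rm d}\nu$ and $\bar f_{k,\kappa} = \kappa^d p_{k,\kappa}$, and write $\tilde f$ for the piecewise-constant function equal to $\bar f_{k,\kappa}$ on $B_k$. Splitting each summand of $\Gamma_\kappa^{\rm one}$ as variance plus squared deterministic shift and summing over $k$ yields
\[
\E_f[\Gamma_\kappa^{\rm one}] = m\bigl(1 - \kappa^{-d}\|\tilde f\|_2^2\bigr) + m^2\kappa^{-d}\,\|\tilde f - 1\|_2^2,
\]
via the identities $\sum_k p_{k,\kappa}^2 = \kappa^{-d}\|\tilde f\|_2^2$ and $\sum_k(p_{k,\kappa} - \kappa^{-d})^2 = \kappa^{-d}\|\tilde f - 1\|_2^2$. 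Since $\tilde f$ is the $L^2$-projection of $f$ onto the piecewise-constant subspace (which contains $1$), Pythagoras gives $\|\tilde f - 1\|_2^2 = \|f - 1\|_2^2 - \|f - \tilde f\|_2^2 \ge \eps^2 - \|f - \tilde f\|_2^2$. Applying~\eqref{H} cell by cell produces a bias bound $\|f - \tilde f\|_2^2 \le C(s,d,L)\,\kappa^{-2s}$, and with the calibration $\kappa \asymp m^{2/(4s+d)}$ from~\eqref{kappa} together with $\eps^2 \ge c_2^2 a^2 m^{-4s/(4s+d)}$ for $c_2$ large enough, this bias is at most $\eps^2/2$, so the signal dominates the null fluctuation and one obtains $\E_f[\Gamma_\kappa^{\rm one}] - \tau \gtrsim a^2 m\kappa^{-d/2}$.

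The final ingredient is $\mathrm{Var}_f[\Gamma_\kappa^{\rm one}] = O(m^2\kappa^{-d})$, which follows from the same multinomial fourth-moment computation, now using the uniform bound $p_{k,\kappa} \le L\kappa^{-d}$ inherited from~\eqref{H-deriv}, together with a cross-term estimate between the centered fluctuations $M_{k,\kappa} - m p_{k,\kappa}$ and the deterministic shifts $m p_{k,\kappa} - m\kappa^{-d}$. A second Chebyshev bound then gives $\P_f(\Gamma_\kappa^{\rm one} \le \tau) = O(1/a^2)$, and adding the two estimates yields the advertised risk bound. I expect the bias estimate $\|f - \tilde f\|_2^2 \lesssim \kappa^{-2s}$ to be the main obstacle: it is essentially one line from~\eqref{H} when $s \le 1$, but for $s > 1$ the $L^2$-approximation rate of piecewise constants saturates at $\kappa^{-2}$ (Poincar\'e on each cell, dominated by the gradient), so recovering the full $\kappa^{-2s}$ seemingly requires either restricting attention to $s \le 1$ or sharpening the argument via local polynomial corrections that leverage the higher-order derivative bounds in~\eqref{H-deriv}.
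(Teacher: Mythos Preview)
Your Chebyshev strategy and your expectation computation under the alternative are correct and match the paper's approach. Two points.

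\textbf{Minor.} The assertion that the off-diagonal covariances $\Cov\bigl((M_{k,\kappa} - m\kappa^{-d})^2,(M_{l,\kappa} - m\kappa^{-d})^2\bigr)$ are non-positive is false in general: already with two equiprobable bins, $(M_1 - m/2)^2 = (M_2 - m/2)^2$, so the covariance is strictly positive. The clean route (and the paper's, in \lemref{chi-var}) is to observe that $\sum_k M_{k,\kappa}^2 = m + \sum_{i\ne i'} \IND{X_i, X_{i'} \text{ in same bin}}$ and compute the variance of this $U$-statistic directly; under $f_0$ one obtains $\Var_{f_0}(\Gamma_\kappa^{\rm one}) = 2m(m-1)\kappa^{-d}(1-\kappa^{-d}) \le 2m^2\kappa^{-d}$, and under a general $f$ an additional term of order $m^3\kappa^{-d}\sum_k(p_{k,\kappa}-\kappa^{-d})^2$ appears. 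This extra term is not $O(m^2\kappa^{-d})$ uniformly, but it is dominated by the square of the mean shift $m^2\sum_k(p_{k,\kappa}-\kappa^{-d})^2$ as soon as $m\kappa^d\sum_k(p_{k,\kappa}-\kappa^{-d})^2$ is large, which holds at the threshold, so the final bound survives.

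\textbf{Substantive.} You have correctly located the real gap. Your Pythagoras step needs $\|f - \tilde f\|_2^2 \lesssim \kappa^{-2s}$, and for $s>1$ this fails: piecewise-constant $L^2$ approximation saturates at rate $\kappa^{-1}$, so with $\kappa$ as in \eqref{kappa} one has $\kappa^{-2} = m^{-4/(4s+d)} \gg m^{-4s/(4s+d)} \asymp \eps^2$, and the inequality $\|\tilde f - 1\|_2^2 \ge \eps^2 - \|f-\tilde f\|_2^2$ is vacuous. The paper's remedy is exactly the ``local polynomial correction'' you speculate about: \lemref{approx} establishes the \emph{direct} lower bound $\|W_\kappa[h]\|_2 \ge b_1\|h\|_2 - b_2\kappa^{-s}$ for any $h \in \cH_s^d(L)$, by first approximating $h$ on coarser blocks of side $r/\kappa$ by its Taylor polynomial $u$ of degree $\lfloor s\rfloor$ (so that $\|h-u\|_\infty \lesssim \kappa^{-s}$ from the H\"older condition on $h^{(\lfloor s\rfloor)}$), and then using a compactness argument (\lemref{proj}) to show that, once $r$ is large but fixed, bin-averaging on the fine grid retains a fixed fraction of the $L^2$ norm of any polynomial of bounded degree. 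Applied to $h = f - 1$ this yields $\|\tilde f - 1\|_2 \ge b_1\eps - b_2\kappa^{-s}$, after which the rest of your argument goes through unchanged for all $s>0$. For $s \le 1$ your Pythagoras route is actually slicker than the paper's and would work as written.
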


In particular, the performance of the chi-squared test matches the minimax lower bound of \thmref{lower1} up to a multiplicative constant.  

\begin{rem}
We do not provide a proof of \thmref{chi1} since it is analogous (and in fact simpler) than the proof of \thmref{chi2}, the corresponding result in the two-sample setting.
\end{rem}

\begin{rem}[Calibration by Monte Carlo]
Although we provide a critical value for the test statistic \eqref{Gamma1}, in practice it can be very approximate and a calibration by Monte Carlo simulation (under the null distribution) is recommended.  As usual, the calibration is meant to control the probability of type I error at a given level, instead of controlling the risk (which in principle requires some information on the alternative).
\end{rem}

\section{The two-sample goodness-of-fit problem}
\label{sec:two}

We now turn to the two-sample setting.  
Here we have at our disposal two independent samples, $X_1, \dots, X_m$ IID with distribution $F$ and $Y_1, \dots, Y_n$ IID with distribution $G$.  
We assume that $F$ and $G$ have densities $f$ and $g$ with respect to the Lebesgue measure on $[0,1]^d$.
The goal is to test
\beq\label{problem}
H_0 : f = g \quad \text{versus} \quad H_1 : \delta(f,g) \ge \eps.
\eeq
for some pseudo-metric $\delta$.  
As before, we use the $L^2$-metric \eqref{L2} and assume that $f$ and $g$ are in the H\"older class parameterized by $(s,d,L)$.  

\subsection{Risk and minimax lower bound}
\label{sec:intrinsicer2}

A test, $\phi$, is now a Borel measurable function of the data  --- which now consists of $m$ points from the $X$-sample and $n$ points from the $Y$-sample --- with values in $[0,1]$.
Let $\cH$ denote a class of real-valued functions on $\bbR^d$ and let $\delta$ be a pseudo-metric on $\cH$.  For $\eps > 0$, define the worst-case risk of a test $\phi$ as
\beq\label{risk}
R_\eps^{(m,n)}(\phi; \cH) = \sup\Big\{\E^{(m,n)}_{f,f}[\phi] : f \in \cH\Big\} + \sup\Big\{\E^{(m,n)}_{f,g}[1-\phi] : f,g \in \cH; \delta(f,g) \ge \eps\Big\}.
\eeq
The minimax risk is
\beq\label{minimax-risk}
R_\eps^{(m,n)}(\cH) = \inf_\phi R_\eps^{(m,n)}(\phi; \cH),
\eeq
where the infimum is over all tests $\phi$.

We obtain a minimax lower bound by reducing the two-sample problem to the one-sample problem.  Intuitively, it is clear that the former is at least as hard as the latter, which in essence corresponds to the case where one of the samples (say, the $Y$-sample) is infinite, so that the corresponding density ($g$ for the $Y$-sample) is known in principle.

\begin{lem} \label{lem:reduction}
For any class $\cH$, any pseudo-metric $\delta$, any $\eps > 0$, any density function $f_0 \in \cH$, and any integers $m, n \ge 1$,
\beq
R_\eps^{(m)}(f_0; \cH) \le R_\eps^{(m,n)}(\cH).
\eeq
\end{lem}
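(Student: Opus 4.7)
The plan is a direct sample-space reduction: any two-sample test can be converted into a one-sample test of no worse risk by simulating the second sample from the known null density $f_0$. The only mild subtlety is that this introduces extra randomness, so we must either allow randomized tests or work with the conditional expectation; since the paper already defines a test as a Borel-measurable function with values in $[0,1]$, we can simply integrate out the simulated sample.

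\medskip

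\noindent\textbf{Step 1 (construction).} Fix an arbitrary two-sample test $\phi : (\bbR^d)^m \times (\bbR^d)^n \to [0,1]$. Let $Y_1^\star, \dots, Y_n^\star$ be IID with density $f_0$, defined on an auxiliary probability space and independent of the data $X_1,\dots,X_m$. Define the one-sample test
\beq
\tilde\phi(X_1,\dots,X_m) := \E\big[\phi(X_1,\dots,X_m, Y_1^\star,\dots,Y_n^\star) \,\big|\, X_1,\dots,X_m\big].
\eeq
This is a Borel measurable function of the data with values in $[0,1]$, hence a valid one-sample test.

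\medskip

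\noindent\textbf{Step 2 (matching the two error probabilities).} Under the one-sample null ($X_i \sim f_0$), the tuple $(X_1,\dots,X_m,Y_1^\star,\dots,Y_n^\star)$ is distributed exactly as the two-sample data under $f=g=f_0$, so by Fubini
\beq
\E_{f_0}^{(m)}[\tilde\phi] = \E_{f_0,f_0}^{(m,n)}[\phi] \le \sup_{f \in \cH} \E_{f,f}^{(m,n)}[\phi],
\eeq
using $f_0 \in \cH$. Similarly, for any $f \in \cH$ with $\delta(f,f_0) \ge \eps$, the pair $(X,Y^\star)$ is distributed as the two-sample data under $(f,f_0)$, so
\beq
\E_f^{(m)}[1-\tilde\phi] = \E_{f,f_0}^{(m,n)}[1-\phi] \le \sup\Big\{\E_{f,g}^{(m,n)}[1-\phi] : f,g \in \cH,\, \delta(f,g) \ge \eps\Big\}.
\eeq

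\medskip

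\noindent\textbf{Step 3 (conclude).} Adding the two bounds and taking the supremum over $f \in \cH$ with $\delta(f,f_0) \ge \eps$ on the left-hand side yields
\beq
R_\eps^{(m)}(\tilde\phi; f_0; \cH) \le R_\eps^{(m,n)}(\phi; \cH).
\eeq
Taking the infimum over $\phi$ on the right — which dominates the infimum over the subclass $\{\tilde\phi\}$ on the left — gives $R_\eps^{(m)}(f_0; \cH) \le R_\eps^{(m,n)}(\cH)$.

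\medskip

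There is no real obstacle here; the only thing to watch is that $f_0 \in \cH$ is used to bound the null error by the two-sample null supremum (which is why the hypothesis $f_0 \in \cH$ appears in the statement), and that measurability of $\tilde\phi$ follows from Fubini applied to the jointly measurable integrand $\phi$.
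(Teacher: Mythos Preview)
Your proof is correct and follows essentially the same approach as the paper: integrate out a simulated $Y$-sample drawn from $f_0$ to turn any two-sample test $\phi$ into a one-sample test $\tilde\phi$, then use Fubini/Tonelli to identify the one-sample errors with the two-sample errors at $(f_0,f_0)$ and $(f,f_0)$ respectively. The only cosmetic difference is that you explicitly flag the role of the hypothesis $f_0\in\cH$ in bounding both terms by the two-sample suprema, which the paper leaves implicit.
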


\lemref{reduction} and \thmref{lower1}, together, lead to the following.

\begin{thm} \label{thm:lower2}
For the two-sample problem under known H\"older regularity, there is a constant $c > 0$ depending only on $(s,d,L)$ such that
\beq\label{lower2-1}
R_\eps^{(m,n)}(\cH_s^d(L)) \ge 1/2, \quad \text{if } \eps \le c (m \wedge n)^{-2s/(4s+d)}.
\eeq
\end{thm}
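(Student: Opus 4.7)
The plan is to chain \lemref{reduction} with \thmref{lower1}, exploiting the symmetry of the two-sample problem in its two sample sizes to upgrade the one-sample bound from $m$ to $m \wedge n$.

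For the first step, I would take $f_0$ to be the uniform density on $[0,1]^d$, which lies in $\cH_s^d(L)$ whenever $L \ge 1$: its derivatives of positive order vanish, so both the H\"older condition \eqref{H} and the supnorm bound \eqref{H-deriv} hold trivially. Hence \thmref{lower1} applies to this $f_0$, furnishing a constant $c = c(s,d,L) > 0$ such that $R_\eps^{(m)}(f_0;\cH_s^d(L)) \ge 1/2$ whenever $\eps \le c\, m^{-2s/(4s+d)}$.

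For the second step, I would observe that the two-sample minimax risk is symmetric in $(m,n)$: relabeling the $X$- and $Y$-samples permutes $(m,n) \leftrightarrow (n,m)$ and swaps the roles of $f$ and $g$, while both the null $\{f=g\}$ and the separation condition $\delta(f,g) \ge \eps$ are symmetric in $(f,g)$, so $R_\eps^{(m,n)}(\cH) = R_\eps^{(n,m)}(\cH)$. Applying \lemref{reduction} to both orderings then yields
\[
\max\bigl(R_\eps^{(m)}(f_0;\cH_s^d(L)),\, R_\eps^{(n)}(f_0;\cH_s^d(L))\bigr) \le R_\eps^{(m,n)}(\cH_s^d(L)).
\]
Because $k \mapsto R_\eps^{(k)}(f_0;\cH)$ is non-increasing --- any test based on $k$ samples can be simulated from $k' \ge k$ samples by discarding the extras --- the left-hand maximum equals $R_\eps^{(m \wedge n)}(f_0;\cH_s^d(L))$. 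Applying \thmref{lower1} with sample size $m \wedge n$ then gives $R_\eps^{(m,n)}(\cH_s^d(L)) \ge 1/2$ whenever $\eps \le c\,(m \wedge n)^{-2s/(4s+d)}$, as claimed.

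There is essentially no obstacle here: the substantive content is housed in \lemref{reduction} (a short reduction argument, where the one-sample problem is embedded into the two-sample problem by ``supplying'' $n$ observations from the known $f_0$) and in \thmref{lower1} (which carries the Ingster-style hypothesis construction based on perturbing $f_0$ by a grid of $\cH_s^d$-regular bumps and invoking a standard hypothesis-testing lower bound). The only verifications needed on top of these are the membership $f_0 \in \cH_s^d(L)$ and the monotonicity of the one-sample risk in the sample size, both of which are immediate.
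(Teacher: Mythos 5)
Your proof is correct and follows exactly the route the paper takes: the paper derives \thmref{lower2} immediately from \lemref{reduction} combined with \thmref{lower1}, leaving implicit the symmetry/monotonicity bookkeeping needed to replace $m$ by $m \wedge n$, which you spell out correctly (along with the check that $f_0 \equiv 1 \in \cH_s^d(L)$ needed to invoke the lemma). No gaps.
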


\subsection{The chi-squared test}
\label{sec:tests2}

For an integer $\kappa \ge 1$ and $k \in [\kappa]^d$, define the bin counts
\beq\label{bin-counts}
\begin{split}
M_{k,\kappa} &= \#\big\{i \in [m] : X_i \in ((k-1)/\kappa, k/\kappa]\big\}, \\
N_{k,\kappa} &= \#\big\{j \in [n] : Y_j \in ((k-1)/\kappa, k/\kappa]\big\}.
\end{split}
\eeq
%In the two-sample setting, the chi-squared test is typically defined as the test rejecting for large values of
%\beq\label{chi0}
%\sum_{k \in [\kappa]^d} \frac{(n M_{k,\kappa} - m N_{k,\kappa})^2}{M_{k,\kappa} + N_{k,\kappa}}.
%\eeq
For simplicity, we work instead with the {\em unnormalized} chi-squared test, which rejects for large values of  
\beq
\sum_{k \in [\kappa]^d} (n M_{k,\kappa} - m N_{k,\kappa})^2.
\eeq
In fact, to further simplify the setting, we focus on the case where $m = n$, in which case the test rejects for large values of
\beq\label{Gamma2}
\Gamma_\kappa := \sum_{k \in [\kappa]^d} (M_{k,\kappa} - N_{k,\kappa})^2,
\eeq
In principle, if one simply wants a test that matches the minimax lower bound stated in \thmref{lower2}, in what follows one can simply assume that $m \le n$ (without loss of generality) and that $n-m$ observations from the $Y$ sample have been discarded.  Although no one would do this in practice, the resulting chi-squared test achieves the minimax rate.

\begin{thm} \label{thm:chi2}
In the two-sample problem under known H\"older regularity, consider the chi-squared test $\phi_{\kappa, \tau} := \IND{\Gamma_\kappa > \tau}$.  
There are constants $c_1$ depending only on $(d,L)$ and $c_2$ depending only on $(s,d,L)$ such that, by choosing $\kappa$ as in \eqref{kappa} and $\tau = 2m + a m \kappa^{-d/2}$ with $a \ge 1$,
\beq\label{chi2-1}
\bar R_\eps^{(m,m)}(\phi_{\kappa, \tau}; \cH_s^d(L)) \le c_1/a^2, \quad \text{if }\eps \ge c_2 a m^{-2s/(4s+d)}.
\eeq
\end{thm}

\begin{rem}[Calibration by permutation]
Although we provide a critical value for the test statistic \eqref{Gamma2}, in practice such values are not reliable and a calibration by permutation is recommended.  The calibration is again meant to control the probability of type I error at a given level.  
\end{rem}

\section{The assumption of low intrinsic dimension}
\label{sec:intrinsic}

When the data are high dimensional, a common approach to circumvent the curse of dimensionality --- which we now know is at play here --- is to assume the data have a low intrinsic dimensionality.  

In our context, in the one-sample setting, this translates into assuming that the null distribution has support of low dimension.  For example, it could be the uniform distribution on a surface.  In that case, the chi-square test could be based on bins that partition the surface, and not much is different.  
Thus we focus the discussion on the more complex setting of two samples.

We consider that setting under the assumption that distributions $F$ and $G$ have supports of low dimension.
This leads to two emblematic situations:  
\bitem
\item {\em Possibly distinct supports.}
Suppose $F$ (resp.~$G$) is the uniform distribution on a compact set $S$ (resp.~$T$) of $\bbR^d$.  In this case, the goal is to test $S = T$ versus $\delta(S, T) \ge \eps$ for some given pseudo-metric on a given class of sets.  For example, the class could be that of submanifolds of dimension $d_0$ (for some $d_0 \le d$,  perhaps unknown), without boundary and reach $\ge r$ \cite{MR0110078} and $\delta$ could be the minimum separation between $S$ and $T$.  
\item {\em Same support.}
Suppose $F$ and $G$ have densities $f$ and $g$ with respect to $\nu$, the uniform measure on a compact set $S$ in $\bbR^d$ of dimension $d_0 \le d$ (for some $d_0 \le d$,  perhaps unknown).  Here the goal is the same as in \eqref{problem}. 
\eitem

The first setting is closely related to some literature on manifold estimation \cite{genovese2012manifold,genovese2012minimax,kim2015tight}, detection \cite{MR2565828}, and clustering \cite{arias2011spectral}.  Based on that literature we speculate that to achieve a modicum of (minimax) optimality requires more specialized tests that take into account the geometry of the problem.  For the sake of cohesion, we will not address this situation here.

In the second setting, if $S$ is known the problem is very close to that of testing in $\bbR^{d_0}$, the only difference being that the binning would be custom built for $S$.  We now consider this setting when $S$ is unknown.  It happens that the chi-squared test, with proper calibration, is able to adapt to an unknown support.  (This is not too surprising since the same phenomenon arises in the context of regression, as established by \citet{kpotufe2011k}.)

Let $\cS_1^{d_0}(L_0)$ denote the class of sets in $[0,1]^d$ which, for each $\kappa > 0$, can be covered with $L_0 \kappa^{d_0}$ bins in a regular partition of $[0,1]^d$ into bins of size $1/\kappa$.  (This is the same partitions defining the chi-squared tests.)  By definition, the sets in $\cS_1^{d_0}(L_0)$ have box dimension at most $d_0$.  For $s \in (0,1]$, let $\cH_s^{d, d_0}(L, L_0)$ denote the sets of real-valued functions $f$ defined on some $S \in \cS_1^{d_0}(L_0)$ and that satisfy \eqref{H}-\eqref{H-deriv}.
Since $\cS_1^{d_0}(L_0)$ includes $S = [0, (L_0 \wedge 1)^{1/d_0}]^{d_0} \times \{0\}^d$, \thmref{lower2} applies with $d$ replaced by $d_0$.  \thmref{chi2} also applies, with $d$ replaced by $d_0$, since the arguments (detailed in \secref{proofs}) are entirely based on the bin size and the approximation within each bin.  
%Note that the bin size is set to
%\beq\label{kappa0}
%\kappa = \kappa(s,d_0) := \lfloor m^{2/(4s+d_0)} \rfloor,
%\eeq
%which requires knowledge of the intrinsic dimension $d_0$.  See the discussion in \secref{discussion}.

To define higher orders of smoothness requires the support set to be smooth enough.  Therefore, for an integer $q \ge 2$, let
\beq\label{Sq}
\cS_q^{d_0}(L_0) = \Big\{S = \sigma([0,1]^{d_0}) : \sigma = (\sigma_1, \dots, \sigma_d) \text{ with } \sigma_k \in \cH_q^{d_0}(L_0) \text{ and } {\rm range}(\sigma_k) \subset [0,1]\Big\}.  
\eeq
For $S \in \cS_q^{d_0}(L_0)$, we say that a function $f : S \mapsto \bbR$ is $\ell$ times differentiable, with $0 \le \ell \le q-1$, if there is $\sigma$ as in \eqref{Sq} such that $f \circ \sigma$ is $\ell$ times differentiable.  For $s \in (q-1,q]$, let $\cH_s^{d, d_0}(L, L_0)$ denote the sets of real-valued functions $f$ defined on some $S \in \cS_q(L_0)$ that satisfy \eqref{H}-\eqref{H-deriv}.
With this definition, once again, \thmref{lower2} and \thmref{chi2} apply with $d$ replaced by $d_0$.
(We note that all this remains true under the stronger requirement that $f \in \cH_s^{d}(L)$, which in particular requires that $f$ is defined on the whole $[0,1]^d$.  In that case, it suffices that $S \in \cS_1^{d_0}(L_0)$.)

We note that the choice of bin size that makes it possible to apply \thmref{chi2} depends not only on the degree of smoothness, as before, but also on the intrinsic dimension.

%\begin{rem}[Choice of $\kappa$]
%We have assumed that the bin size, $1/\kappa$, is chosen appropriately according to the intrinsic dimension $d_0$ of $S$ and the smoothness $s$ of the densities.  With this information, one would choose $\kappa = \kappa(s, d_0)$ as defined in \eqref{kappa}, which is the choice of $\kappa$ when the dimension is $d_0$.  If the intrinsic dimension of $S$ is unknown, one can resort to a multiscale test as described in \secref{unknown}.  
%\end{rem}

\section{Adaptation to unknown smoothness and/or intrinsic dimension} \label{sec:unknown}

We assumed in \secref{one} and \secref{two} that the degree of H\"older smoothness $s$ was known, and in \secref{intrinsic} we assumed that the dimension of the underlying support was known, and in both cases, this information was crucially used in the choice of bin size.  It is natural to ask what can be done when this information is not available.  

\citet{ingster2000adaptive} considers the situation where the smoothness is unknown, again focusing entirely on the one-sample setting in dimension $d=1$.  He proposes a multiscale chi-squared test, which consists in performing the chi-squared test of \secref{tests1} for each dyadic bin size in a certain range and applying a Bonferroni correction for multiple testing.  
He shows that the resulting test achieves the minimax rate, which he also derives for the setting where the smoothness is unknown and happens to differ from \eqref{lower1-1} by the fact that $m$ there is replaced by $m/\sqrt{\log\log m}$ factor in the upper bound on $\eps$.  In what follows, we also study a multiscale chi-squared test and derive a performance bound that, although crude compared to Ingster's, is sufficient to show that not much power is lost when the smoothness is unknown.  We will leave it as an exercise to the reader to check that the same test is also able to adapt to the intrinsic dimension of the underlying support in the context of \secref{intrinsic}.  We focus on the two-sample setting.

\subsection{Risk and minimax lower bound}
In the adaptive two-sample setting, for $\mathfrak{S} \subset (0,\infty)$ and $\eps = (\eps_s : s \in \mathfrak{S})$, define the risk of a test $\phi$ as 
\beq
\sup_{s \in \mathfrak{S}} R_{\eps_s}^{(m,n)}(\phi; \cH_s^d(L)),
\eeq 
and then the minimax risk as 
\beq
\inf_\phi \sup_{s \in \mathfrak{S}} R_{\eps_s}^{(m,n)}(\phi; \cH_s^d(L)),
\eeq
where this time the infimum is over all tests $\phi$ with only knowledge of $\mathfrak{S}$ and not the specific smoothness $s$ in that set.  (The parameters $\eps,L$ remain known, as before.)

Here we content ourselves with the obvious lower bound
\beq
\inf_\phi \sup_{s' \in \mathfrak{S}} R_{\eps_{s'}}^{(m,n)}(\phi; \cH_{s'}^d(L)) \ge 
\inf_\phi R_{\eps_s}^{(m,n)}(\phi; \cH_s^d(L)), \quad \forall s \in \mathfrak{S},
\eeq
so that, if $c(s)$ denotes the constant appearing in \eqref{lower2-1}, we have 
\beq\label{lower-adapt}
\inf_\phi \sup_{s \in \mathfrak{S}} R_{\eps_s}^{(m,n)}(\phi; \cH_s^d(L)) \ge 1/2, \quad \text{if there is $s \in \mathfrak{S}$ such that } \eps_s \le c(s) (m \wedge n)^{-2s/(4s+d)}.
\eeq 

\citet{ingster2000adaptive} derives a sharper bound for the case where $\mathfrak{S}$ is a compact interval of $(0, \infty)$.

\subsection{The multiscale chi-squared test}
Following \citet{ingster2000adaptive}, we propose a multiscale chi-squared test which consists in testing at different (dyadic) bins sizes.  We focus again on the case where $m = n$ and propose rejecting for large values of
\beq\label{adapt}
\phi = \max_\kappa \ \IND{\Gamma_\kappa \ge \tau_\kappa},
\eeq
where $\Gamma_\kappa$ is defined in \eqref{Gamma2}, 
\beq
\tau_\kappa := 2m + a m (\log m)^{1/2} \kappa^{-d/2},
\eeq
and the maximum is over $\kappa \in \{2^b : b = 1, \dots, b_{\rm max}\}$ where $b_{\rm max} := \lfloor \frac2d \log_2(m) \rfloor$.

\begin{prp} \label{prp:adapt}
The multiscale chi-squared test $\phi$ above satisfies
\beq\label{upper-adapt}
\sup_{s > 0} R_{\eps_s}^{(m,n)}(\phi; \cH_s^d(L)) \le c_1/a^2, \quad \text{if }\eps_s \ge c_2(s) a (\log m)^{1/2} m^{-2s/(4s+d)} \text{ for all } s > 0,
\eeq
where $c_1$ is a constant depending only on $(d,L)$ and $c_2(s)$ is the $c_2$ constant appearing in \eqref{chi2-1}.
\end{prp}

\citet{ingster2000adaptive} considers a different variant of the multiscale chi-squared test for which he establishes a sharper bound for the case where $\mathfrak{S}$ is a compact interval of $(0, \infty)$ using a variant of the Berry-Esseen inequality.  The resulting bound matches the minimax rate (for the adaptive setting).  We conjecture that his results generalize to higher dimensions, while our work leaves a gap between the lower bound \eqref{lower-adapt} and the upper bound \eqref{upper-adapt} of order $\sqrt{\log m}$.

\section{Proofs}
\label{sec:proofs}

\subsection{Proof of \lemref{reduction}}
Let $\phi$ be any test for the two-sample problem.  Define a test $\psi$ for the one-sample problem as follows
\beq
\psi(x_1,\dots,x_m) = \E^{(n)}_{f_0}[\phi(x_1,\dots,x_m; Y_1, \dots, Y_n)],
\eeq
where in the expectation $Y_1,\dots,Y_n$ are IID from $f_0$.  By Tonelli's theorem, we then have
\beq\begin{split}
R_\eps^{(m)}(\psi; f_0; \cH) 
&= \E^{(m,n)}_{f_0,f_0}[\phi] + \sup\Big\{\E^{(m,n)}_{f,f_0}[1-\phi] : f \in \cH; \delta(f,f_0) \ge \eps\Big\} \\
&\le R_\eps^{(m,n)}(\phi; \cH).
\end{split}\eeq
By definition $R_\eps^{(m)}(f_0; \cH) \le R_\eps^{(m)}(\psi; f_0; \cH)$, and then the proof follows by taking the infimum over all tests $\phi$ for the two-sample problem.

\subsection{Proof of \thmref{lower1}}
As is customary, we build a prior on the set of alternatives.
Let $h : \bbR^d \mapsto \bbR$ be infinitely differentiable  with support in $[0,1]^d$ and such that $\int h = 0$ and $\int h^2 = 1$.  Let $\kappa \ge 1$ be an integer.  For $j \in \bbZ^d$, define $h_{j,\kappa}(x) = \kappa^{d/2} h(\kappa x - j+1)$ and note that $h_{j,\kappa}$ is supported on $[(j-1)/\kappa, j/\kappa]$ with $\|h_{j,\kappa}\|_2 = 1$.  For $\eta = (\eta_1, \dots, \eta_{\kappa^d}) \in \{-1,+1\}^{\kappa^d}$ and $\rho > 0$ to be chosen later, define
\beq
f_\eta = f_0 + \rho \sum_{j\in [\kappa]^d} \eta_j h_{j,\kappa} \triangleq 1 + \rho \sum_{j\in [\kappa]^d} \eta_j h_{j,\kappa},
\eeq
Note that $\int f_\eta = 1$. In addition, using the fact that the $h_{j,\kappa}$'s have disjoint supports, we have the following:
\bitem
\item For $\rho \kappa^{d/2} \|h\|_\infty \le 1$, we have
\beq
f_\eta \ge 1 - \rho \kappa^{d/2} \|h\|_\infty \ge 0.
\eeq
\item For $\rho \kappa^{d/2+s} C/L \le 1$, where 
$C := 4\|h^{(\lfloor s \rfloor)}\|_\infty  \vee 2 \|h^{(\lfloor s \rfloor+1)}\|_\infty $,
we have $f_\eta \in \cH_s^d(L)$.
To see this, first note that $f_\eta$ is infinitely differentiable.
Take $x, y \in [0,1]^d$.  Let $k,l \in [\kappa]^d$ be such that $x \in [(k-1)/\kappa, k/\kappa]$ and $y \in [(l-1)/\kappa, l/\kappa]$.  Then 
\beq\begin{split}
|f_\eta^{(\lfloor s \rfloor)}(x) - f_\eta^{(\lfloor s \rfloor)}(y)| 
&= \rho \kappa^{d/2} \kappa^{\lfloor s \rfloor} \Big|\sum_{j\in [\kappa]^d} \eta_j \big[h^{(\lfloor s \rfloor)}(\kappa x -j+1) - h^{(\lfloor s \rfloor)}(\kappa y -j+1)\big]\Big| \\
&\le \rho \kappa^{d/2+\lfloor s \rfloor} \Big[\big|h^{(\lfloor s \rfloor)}(\kappa x -k+1) - h^{(\lfloor s \rfloor)}(\kappa y -k+1))\big| \\
&\quad + \big|h^{(\lfloor s \rfloor)}(\kappa x -l+1) - h^{(\lfloor s \rfloor)}(\kappa y -l+1))\big|\Big] \\
&\le \rho \kappa^{d/2+\lfloor s \rfloor} \Big[(4\|h^{(\lfloor s \rfloor)}\|_\infty) \wedge (2 \|h^{(\lfloor s \rfloor+1)}\|_\infty \kappa \|x-y\|)\Big] \\
&\le \rho \kappa^{d/2+\lfloor s \rfloor}  \big( 4\|h^{(\lfloor s \rfloor)}\|_\infty  \vee 2 \|h^{(\lfloor s \rfloor+1)}\|_\infty \big) \left( 1 \wedge \kappa \|x-y\| \right) \\
&\le L \|x - y\|^{s-\lfloor s \rfloor},
\end{split}\eeq
where we have used the fact that $(1 \wedge u)^a \leq u^a$ for any $u>0$ and $0<a<1$.
\eitem
Given $\kappa$, take $\rho$ just small enough that these conditions are satisfied.
Then with $\eps=\rho \kappa^{d/2}$, we have
\beq
\|f_\eta - f_0\|_2^2 = \rho^2 \sum_{j\in [\kappa]^d} \int (\eta_j h_{j,\kappa})^2 = \rho^2 \kappa^d = \eps^2.
\eeq  
Then, as prior on the set of alternatives, consider the uniform distribution on $\{f_\eta : \eta \in \{-1,+1\}^{\kappa^d}\}$.  In other words, the prior picks an alternative by drawing a Rademacher vector $\eta$ and forming $f_\eta$.  The following is standard.  The minimax risk is lower bounded by the Bayes (i.e., average) risk with respect to that prior, which is attained by the likelihood ratio test $\{W > 1\}$, where 
\beq
W := 2^{-\kappa^d} \sum_{\eta \in \{-1,+1\}^{\kappa^d}} \prod_{i=1}^m f_\eta(X_i).
\eeq
It is well-known that the risk of the likelihood ratio test is bounded from below by $1 - \frac12 \sqrt{\Var_0(W)}$. We thus turn to upper bounding $\Var_0(W) = \E_0(W^2) - 1$.  We have
\beq\begin{split}
\E_0(W^2)
&= 2^{-2\kappa^d} \sum_{\eta, \eta' \in \{-1,+1\}^{\kappa^d}} \prod_{i=1}^m \E_0 \big(f_\eta(X_i) f_{\eta'}(X_i)\big) 
%&= \E_\eta \E_{\eta'} \Big[\<f_\eta, f_{\eta'}\>^m\Big] \\
\end{split}\eeq
with
\beq
\prod_{i=1}^m \E_0 \big(f_\eta(X_i) f_{\eta'}(X_i)\big) = \<f_\eta, f_{\eta'}\>^m = \big(1 + \rho^2 \<\eta, \eta'\>\big)^m \le \exp\big(m \rho^2 \<\eta, \eta'\>\big),
\eeq
using the fact that $\{h_{j,\kappa} : j \in [\kappa]\}$ are orthonormal.  Thus, seeing $\eta, \eta'$ as IID Rademacher vectors, we have
\beq
\E_0(W^2) \le \E_{\eta, \eta'} \Big[\exp\big(m \rho^2 \<\eta, \eta'\>\big)\Big] = \cosh(m \rho^2)^{\kappa^d} \le \big(1 + (m \rho^2)^2\big)^{\kappa^d} \le \exp\big(\kappa^d (m \rho^2)^2\big),
\eeq
%using the fact that $\<\eta, \eta'\> \sim 2 \Bin(\kappa^d, 1/2) - \kappa^d$ and then 
using the fact that $\cosh(x) \le 1 + x^2$ for $x \in [0,1]$.  
This assumes that $m \rho^2 \le 1$.
In that case, $\Var_0(W) \le 1$ when $\kappa^d (m \rho^2)^2 \le \log 2$, which in turn implies that the minimax risk is bounded from below by $1/2$.  It is then easy to see that we may choose $\kappa = \lfloor m^{2/(4s+d)} \rfloor$ and $\rho = c m^{-(2s+d)/(4s+d)}$ for a sufficiently small constant $c>0$.  This results in $\eps \asymp m^{-2s/(4s+d)}$.

\subsection{The chi-squared test} 
\label{sec:chi2}
%We study the test based on $\Gamma_\kappa$ defined in \eqref{Gamma2} in the setting where $m = n$.  In fact, we assume without loss of generality that $m \le n$ (so that $m = m \wedge n$) and we discard at random $n-m$ points from the $Y$ sample, before apply the test based on \eqref{Gamma2}.
%We then analyze that test using Chebyshev's inequality.

We first consider the chi-squared goodness-of-fit test in a general situation, where we are testing the equality of two distributions $p$ and $q$ on a discrete set $\cK$ based on a sample $A_1, \dots, A_m$ from $p$ and a sample $B_1, \dots, B_m$ from $q$, all independent.
The (unnormalized) chi-squared rejects for large values of 
\beq
T = \sum_{k \in \cK} (M_k - N_k)^2,
\eeq
where
\beq\label{chi1}
M_k = \# \{i \in [m]: A_i = k\}, \quad
N_k = \#\{j \in [m]: B_j = k\}, \quad 
\eeq

We will see $p$ and $q$ as (probability) vectors indexed by $\cK$.
For a vector $u = (u_k)$, let $\<u\> = \sum_k u_k$, let $u^a = (u_k^a)$ for any $a > 0$, and if $v = (v_k)$ is another vector, let $uv = (u_kv_k)$.  

\begin{lem} \label{lem:chi-var}
We have
\beq
\E(T) = 2m + m^2 \<(p-q)^2\> - m (\<p^2\> + \<q^2\>),
\eeq
and
\beq
\Var(T) 
\le 2 m^2 \<(p+q)^2\> + 4 m^3 \Big(\<(p+q)(p-q)^2\> + 2\<pq\>\<(p-q)^2\>\Big).
\eeq
\end{lem}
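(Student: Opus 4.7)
For the expectation, the plan is to use independence $M_k \perp N_k$ with $M_k \sim \mathrm{Bin}(m,p_k)$ and $N_k \sim \mathrm{Bin}(m,q_k)$, which gives
\begin{equation*}
\E(M_k - N_k)^2 = m^2(p_k - q_k)^2 + m p_k(1-p_k) + m q_k(1-q_k).
\end{equation*}
Summing over $k$ and using $\<p\> = \<q\> = 1$ yields the stated identity $\E T = 2m + m^2 \<(p-q)^2\> - m(\<p^2\> + \<q^2\>)$.

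For the variance, I would first center and then split $T$ into a quadratic and a linear part. Set $\delta_k := p_k - q_k$ and $\bar D_k := (M_k - m p_k) - (N_k - m q_k)$, so that $D_k := M_k - N_k = \bar D_k + m\delta_k$. Expanding $D_k^2$, summing over $k$, and using $\E V = 0$ to cancel the deterministic pieces against $\E T$ gives
\begin{equation*}
T - \E T = (U - \E U) + 2 m V, \qquad U := \sum_k \bar D_k^2, \quad V := \sum_k \delta_k \bar D_k.
\end{equation*}
Hence $\Var T = \Var U + 4 m^2 \Var V + 4 m\,\mathrm{Cov}(U,V)$, and the task reduces to bounding each of the three pieces.

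The piece $\Var V$ is easy: by independence of the two samples and the multinomial identity $\Var(\sum_k \delta_k M_k) = m(\<p\delta^2\> - \<p\delta\>^2) \le m\<p\delta^2\>$, one obtains $\Var V \le m\<(p+q)(p-q)^2\>$, accounting for the $4m^3\<(p+q)(p-q)^2\>$ term in the bound. For $\Var U = \sum_k \Var(\bar D_k^2) + \sum_{k\ne l} \mathrm{Cov}(\bar D_k^2, \bar D_l^2)$, I would bound the diagonal terms via fourth binomial central moments (which are $O(m^2 p_k^2 + m p_k)$ for $\bar M_k$ and similarly for $\bar N_k$), yielding a contribution of order $m^2 \<(p+q)^2\>$ consistent with the first term in the statement. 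For the off-diagonal covariances, independence $M \perp N$ reduces everything to within-sample multinomial joint moments, which I would compute combinatorially via $\bar M_k = \sum_i (\IND{X_i = k} - p_k)$ and enumeration of index coincidences among the four factors. The cross term $\mathrm{Cov}(U,V) = \E(UV)$ is handled the same way, being a third mixed multinomial moment.

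The main obstacle is showing that the off-diagonal contributions to $\Var U$ together with the cross term $4m\,\mathrm{Cov}(U,V)$ can be packaged into the $8 m^3 \<pq\>\<(p-q)^2\>$ piece of the bound. I expect this to emerge after applying a Cauchy-Schwarz step to sums of the form $\sum_{k,l} p_k q_l \delta_k \delta_l$ that arise from the multinomial moment calculations, with $\<pq\>$ appearing from a rank-one contraction and $\<(p-q)^2\>$ from the Cauchy-Schwarz bound against the multinomial covariance structure. The rest of the argument is careful book-keeping to keep track of constants.
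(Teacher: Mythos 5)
Your computation of $\E(T)$ via the binomial marginals of $M_k$ and $N_k$ is correct and complete, and is in fact a cleaner route to the first identity than the paper's (which gets it from $\E\,\IND{A_i=A_{i'}}=\<p^2\>$, etc.). Your centering decomposition $T-\E T=(U-\E U)+2mV$ is also exact and is a legitimate alternative to the paper's strategy, which instead expands $T=\sum_{i,i'}\IND{A_i=A_{i'}}+\sum_{j,j'}\IND{B_j=B_{j'}}-2\sum_{i,j}\IND{A_i=B_j}$ and computes every variance and covariance exactly by enumerating index coincidences, applying Cauchy--Schwarz only once at the very end to discard $2\<pq\>^2-\<p^2\>^2-\<q^2\>^2\le 0$.

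The variance bound, however, is not established, and the plan you give for the part you identify as the ``main obstacle'' is aimed at the wrong term. Since $U$ and $V$ are built from \emph{centered} counts, $\Var(U)$ and $\Cov(U,V)$ involve only central multinomial moments and are of order $m^2$ (and lower); the entire $m^3$-order contribution sits in $4m^2\Var(V)=4m^3\big[\<(p+q)(p-q)^2\>-\<p\delta\>^2-\<q\delta\>^2\big]\le 4m^3\<(p+q)(p-q)^2\>$. So in your decomposition the $8m^3\<pq\>\<(p-q)^2\>$ term is pure slack and cannot be the receptacle for the off-diagonal part of $\Var(U)$ or for $4m\Cov(U,V)$: under the null $p=q$ that term is zero, yet $\sum_{k\ne l}\Cov(\bar D_k^2,\bar D_l^2)$ is not. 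Those pieces must instead fit, together with the diagonal of $\Var(U)$, under the budget $2m^2\<(p+q)^2\>$, and your proposed bookkeeping does not get them there: bounding the diagonal by fourth central moments already yields roughly $\sum_k\Var(\bar D_k^2)\le 2m^2\<(p+q)^2\>+m\<p+q\>=2m^2\<(p+q)^2\>+2m$, and the off-diagonal covariances contain additional \emph{positive} pieces such as $2m(m-1)\sum_{k\ne l}p_k^2p_l^2$. The stated constant survives only because these are cancelled by negative pieces (e.g., $-m\,p_k(1-p_k)p_l(1-p_l)$ summed over $k\ne l$) that an order-of-magnitude or absolute-value treatment of the ``index coincidences'' throws away. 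In short: the decomposition can be made to work, but it requires the same exact sign-preserving moment computations the paper performs, and the Cauchy--Schwarz step you anticipate is not the missing ingredient --- the needed inequality in your setup is simply $\<p\delta\>^2+\<q\delta\>^2\ge 0$, while the genuinely delicate part is the $O(m^2)$ bookkeeping you have deferred.
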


\begin{proof}
Let $\alpha_{ii'} = \IND{A_i = A_{i'}}$, $\beta_{jj'} = \IND{B_j = B_{j'}}$, and $\gamma_{ij} = \IND{A_i = B_j}$.  We have
\begin{align}
T 
&= \sum_{i, i'} \alpha_{ii'} + \sum_{j,j'} \beta_{jj'} - 2 \sum_{i,j} \gamma_{ij} \\
&= 2m + \sum_{i \ne i'} \alpha_{ii'} + \sum_{j \ne j'} \beta_{jj'} - 2 \sum_{i,j} \gamma_{ij},
\end{align}
using the fact that $\alpha_{ii} = 1$ and $\beta_{jj} = 1$.

For the expectation, we use the fact that 
\beq
\E(\alpha_{ii'}) = \<p^2\>, \quad \forall i \ne i'; \qquad
\E(\beta_{jj'}) = \<q^2\>, \quad \forall j \ne j'; \qquad
\E(\gamma_{ij}) = \<pq\>, \quad \forall i,j,
\eeq
and use the fact that $\<(p-q)^2\> = \<p^2\> + \<q^2\> - 2\<pq\>$.

For the variance, we have
\begin{align}
\Var(T) 
&= \Var\Big(\sum_{i \ne i'} \alpha_{ii'}\Big) + \Var\Big(\sum_{j \ne j'} \beta_{jj'}\Big) + 4 \Var\Big(\sum_{i,j} \gamma_{ij}\Big) \\ 
&\quad - 4 \Cov(\sum_{i \ne i'} \alpha_{ii'},\ \sum_{i,j} \gamma_{ij}\Big) - 4 \Cov(\sum_{j \ne j'} \beta_{jj'},\ \sum_{i,j} \gamma_{ij}\Big),
\end{align}
where we used the fact that the $\alpha$'s are independent of the $\beta$'s.
We derive
\beq
\Var\Big(\sum_{i \ne i'} \alpha_{ii'}\Big) 
= \sum_{i_1 \ne i_1'}\sum_{i_2 \ne i_2'} \Cov(\alpha_{i_1i_1'}, \alpha_{i_2i_2'})
\eeq
where, for $i_1 \ne i_1'$ and $i_2 \ne i_2'$,
\begin{align}
\Cov(\alpha_{i_1i_1'}, \alpha_{i_2i_2'}) 
&= \E(\alpha_{i_1i_1'}\alpha_{i_2i_2'}) - \E(\alpha_{i_1i_1'})\E(\alpha_{i_2i_2'}) \\
&= \P(A_{i_1} = A_{i_1'}, A_{i_2} = A_{i_2'}) - \P(A_{i_1} = A_{i_1'}) \P(A_{i_2} = A_{i_2'}),
\end{align}
so that
\beq
\Cov(\alpha_{i_1i_1'}, \alpha_{i_2i_2'}) = \begin{cases}
\<p^2\> - \<p^2\>^2, & \text{if $\{i_1, i_1', i_2, i_2'\}$ has 2 distinct elements}; \\
\<p^3\> - \<p^2\>^2, & \text{if $\{i_1, i_1', i_2, i_2'\}$ has 3 distinct elements}; \\
0, & \text{otherwise}.
\end{cases}
\eeq
Counting how many instances of each case arises in the sum above, we arrive at
\begin{align}
\Var\Big(\sum_{i \ne i'} \alpha_{ii'}\Big) 
&= 2 m(m-1) (\<p^2\> - \<p^2\>^2) + 4 m(m-1)(m-2) (\<p^3\> - \<p^2\>^2) \\
&\le 2 m^2 \<p^2\> + 4 m^2(m-1) (\<p^3\> - \<p^2\>^2).
\end{align}
By analogy, we directly deduce that
\begin{align}
\Var\Big(\sum_{j \ne j'} \beta_{jj'}\Big) 
&= 2 m(m-1) (\<q^2\> - \<q^2\>^2) + 4 m(m-1)(m-2) (\<q^3\> - \<q^2\>^2) \\ 
&\le 2 m^2 \<q^2\> + 4 m^2(m-1) (\<q^3\> - \<q^2\>^2).
\end{align}
Similarly, 
\beq
\Var\Big(\sum_{i,j} \gamma_{ij}\Big)
= \sum_{i_1, j_1} \sum_{i_2, j_2} \Cov(\gamma_{i_1j_1}, \gamma_{i_2j_2}),
\eeq
with
\beq
\Cov(\gamma_{i_1j_1}, \gamma_{i_2j_2}) = \begin{cases}
\<pq\> - \<pq\>^2, & \text{if $i_1 = i_2$ and $j_1 = j_2$}; \\
\<pq^2\> - \<pq\>^2, & \text{if $i_1 = i_2$ and $j_1 \ne j_2$}; \\
\<p^2q\> - \<pq\>^2, & \text{if $i_1 \ne i_2$ and $j_1 = j_2$}; \\
0, & \text{otherwise}.
\end{cases}
\eeq
Then counting how many instances of each case arises in the sum above, we arrive at
\begin{align}
4 \Var\Big(\sum_{i,j} \gamma_{ij}\Big)
&= 4 m^2 (\<pq\> - \<pq\>^2) + 4 m^2(m-1) (\<pq^2\> + \<p^2q\> - 2\<pq\>^2) \\
&\le 4 m^2 \<pq\> + 4 m^2(m-1) (\<pq^2\> + \<p^2q\> - 2\<pq\>^2).
\end{align}
We also have
\beq
\Cov(\sum_{i \ne i'} \alpha_{ii'},\ \sum_{i,j} \gamma_{ij}\Big)
= \sum_{i_1 \ne i_1'} \sum_{i_2, j_2} \Cov(\alpha_{i_1i_1'}, \gamma_{i_2j_2}),
\eeq
with
\beq
\Cov(\alpha_{i_1i_1'}, \gamma_{i_2j_2}) = \begin{cases}
\<p^2q\> - \<p^2\>\<pq\>, & \text{if $i_1 = i_2$ or $i_1' = i_2$}; \\
0, & \text{otherwise}.
\end{cases}
\eeq
Then counting how many instances of each case arises in the sum above, we arrive at
\begin{align}
- 4 \Cov(\sum_{i \ne i'} \alpha_{ii'},\ \sum_{i,j} \gamma_{ij}\Big)
= -8  m^2(m-1)(\<p^2q\> - \<p^2\>\<pq\>).
\end{align}
By analogy, we directly deduce that
\begin{align}
-4\Cov(\sum_{j \ne j'} \beta_{jj'},\ \sum_{i,j} \gamma_{ij}\Big)
= -8 m^2(m-1)(\<pq^2\> - \<q^2\>\<pq\>).
\end{align}

Combining all the inequalities above, and simplifying a bit, we arrive at
\begin{align}
\Var(T) 
&\le 2 m^2 (\<p^2\> + \<q^2\> + 2 \<pq\>) \\
&\quad + 4 m^2(m-1) \Big(\<p^3\> - \<p^2\>^2 + \<q^3\> - \<q^2\>^2 + \<pq^2\> + \<p^2q\> - 2\<pq\>^2 \\ 
&\qquad - 2\big[\<p^2q\> - \<p^2\>\<pq\> + \<pq^2\> - \<q^2\>\<pq\>\big] \Big) \\
&= 2 m^2 (\<p^2 + q^2 + 2 pq\>) \\
&\quad + 4 m^2(m-1) \Big(\<p^3 + q^3 -p^2q -q^2p\> \\
&\qquad - \<p^2\>^2 - \<q^2\>^2 - 2\<pq\>^2 + 2\<p^2\>\<pq\> + 2\<q^2\>\<pq\>\Big) \\
&=2 m^2 \<(p+q)^2\> \\
&\quad + 4 m^2(m-1) \Big(\<(p+q)(p-q)^2\> \\
&\qquad + 2\<pq\>\<(p-q)^2\> + 2\<pq\>^2 -\<p^2\>^2 -\<q^2\>^2\Big) \\
&\le 2 m^2 \<(p+q)^2\> + 4 m^2(m-1) \Big(\<(p+q)(p-q)^2\> + 2\<pq\>\<(p-q)^2\>\Big),
\end{align}
using the fact that
\beq
2\<pq\>^2 -\<p^2\>^2 -\<q^2\>^2 \le 2 \<p^2\>\<q^2\> -\<p^2\>^2 -\<q^2\>^2 = - (\<p^2\> - \<q^2\>)^2 \le 0,
\eeq
by the Cauchy-Schwarz inequality.
\end{proof}

Having computed the mean and variance of the statistic $T$, we now apply Chebyshev's inequality to analyze the performance of the corresponding test.
For a vector $u = (u_k)$, let $\|u\|_\infty = \sup_k u_k$ and $\|u\| = (\sum_k u_k^2)^{1/2}$.

\begin{cor} \label{cor:chi-cheb}
Consider testing within the class of probability distributions $r$ on $\cK$ such that $\|r\|_\infty \le \eta$ for some $\eta \in (0, 1]$.  There are universal constants $v_1, v_2 > 0$ such that, for any $a \ge 1$, the test with rejection region $\{T - 2m \ge a m \sqrt{\eta}\}$ has size at most $v_1/a^2$, and has power at least $1-v_1/a^2$ against alternatives satisfying
\beq\label{cor-chi-cheb}
\|p-q\|^2 \ge v_2 a^2 \sqrt{\eta}/m.
\eeq
\end{cor}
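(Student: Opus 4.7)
The plan is to apply Chebyshev's inequality to the statistic $T$ on both the null and the alternative, using the mean formula and variance bound from \lemref{chi-var}, and then simplifying via the assumption $\|r\|_\infty \le \eta$.

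First I would collect the consequences of $\|p\|_\infty, \|q\|_\infty \le \eta$ that feed into \lemref{chi-var}. Since $\<p\> = \<q\> = 1$, we get $\<p^2\> \le \eta$, $\<q^2\> \le \eta$, $\<pq\> \le \eta$, and $\<(p+q)^2\> \le 2(\<p^2\>+\<q^2\>) \le 4\eta$. Also $\<(p+q)(p-q)^2\> \le \|p+q\|_\infty \|p-q\|^2 \le 2\eta\|p-q\|^2$. Plugging into \lemref{chi-var} gives the clean bound
\beq
\Var(T) \le 8 m^2 \eta + 16\, m^3\, \eta\, \|p-q\|^2,
\eeq
together with $\E(T) = 2m + m^2 \|p-q\|^2 - m(\<p^2\>+\<q^2\>)$, so $\E(T) - 2m$ lies between $m^2\|p-q\|^2 - 2m\eta$ and $m^2\|p-q\|^2$.

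For the size, under the null $\|p-q\| = 0$, so $\E(T) \le 2m$ and $\Var(T) \le 8m^2\eta$. Chebyshev then gives
\beq
\P_0(T - 2m \ge a m \sqrt{\eta}) \le \frac{\Var(T)}{a^2 m^2 \eta} \le \frac{8}{a^2},
\eeq
which controls the type I error by $v_1/a^2$ for a suitable universal $v_1$.

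For the power, I would bound the lower tail probability $\P(T < 2m + am\sqrt{\eta})$ by Chebyshev applied around $\E(T)$. Writing $t := \E(T) - 2m - am\sqrt{\eta}$, this probability is at most $\Var(T)/t^2$, provided $t > 0$. The idea is to choose $v_2$ large enough that, under $\|p-q\|^2 \ge v_2(a\sqrt\eta \vee a^2\eta \vee \eta)/m$, the quantity $m^2\|p-q\|^2$ dominates both $2m\eta$ and $am\sqrt{\eta}$, yielding $t \ge \tfrac12 m^2\|p-q\|^2$. Then
\beq
\frac{\Var(T)}{t^2} \le \frac{32\,\eta}{m^2 \|p-q\|^4} + \frac{64\,\eta}{m \|p-q\|^2},
\eeq
and each term is $\le v_1/a^2$ precisely when $\|p-q\|^2$ exceeds a constant times $a\sqrt\eta/m$ and $a^2\eta/m$, respectively; the $\eta/m$ piece is what guarantees $t \ge \tfrac12 m^2\|p-q\|^2$ in the first place. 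The three regimes in the maximum in \eqref{cor-chi-cheb} thus arise one-to-one from (i) the bias term $m(\<p^2\>+\<q^2\>)$ in $\E(T)$, (ii) balancing the leading-order variance against the gap, and (iii) balancing the $\|p-q\|^2$-dependent variance against the gap.

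The only place requiring care is keeping track of these three competing constraints simultaneously; the main, and only mild, obstacle is choosing $v_2$ large enough so that all three inequalities hold with a single constant and $t \ge \tfrac12 m^2 \|p-q\|^2$ is valid throughout. Everything else is a direct application of Chebyshev together with the algebraic simplifications coming from $\|r\|_\infty \le \eta$.
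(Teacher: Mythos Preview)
Your proposal is correct and follows essentially the same route as the paper: bound $\langle p^2\rangle,\langle q^2\rangle,\langle pq\rangle,\langle(p+q)^2\rangle,\langle(p+q)(p-q)^2\rangle$ via $\|r\|_\infty\le\eta$, plug into \lemref{chi-var} to get $\Var(T)\le 8m^2\eta+16m^3\eta\|p-q\|^2$, and apply Chebyshev under the null and the alternative. The only cosmetic difference is that the paper writes the alternative-side Chebyshev as ``$T-2m\le \E[T]-2m-a\sqrt{\Var(T)}$ with probability $\le 1/a^2$'' and then splits the resulting inequality into four pieces, whereas you bound $\Var(T)/t^2$ after forcing $t\ge\tfrac12 m^2\|p-q\|^2$; the constraints produced are the same.
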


\begin{proof}
First assume that we are under the null so that $p = q$.  By \lemref{chi-var}, we have $\E[T] \le 2m$ and $\Var(T) \le 8 m^2 \<p^2\>$, with
\beq
\<p^2\> = \sum_k p_k^2 \le \eta \sum_k p_k = \eta,
\eeq
using the fact that $\|p\|_\infty \le \eta$, $\min_k p_k \ge 0$, and $\sum_k p_k = 1$.  
(The same manipulations are performed below, but details are omitted.)
Thus, by Chebyshev's inequality, 
\beq
T - 2m \ge a m \sqrt{\eta}
\eeq
with probability at most $8/a^2$.
%This proves the bound on the size.

When $p \ne q$, by \lemref{chi-var}, we have 
\beq
\E[T] -2m \ge m^2 \|p-q\|^2 - 2 m\eta
\eeq
and 
\beq
\Var(T) 
\le 8 m^2 \eta + 4 m^3 [ 2 \eta \|p-q\|^2 + 2 \eta \|p-q\|^2]
= 8 m^2 \eta + 16 m^3 \eta \|p-q\|^2.
\eeq
Then, by Chebyshev's inequality,
\beq
T - 2m \le m^2 \|p-q\|^2 - 2 m\eta - a \sqrt{8 m^2 \eta + 16 m^3 \eta \|p-q\|^2}
\eeq
with probability at most $1/a^2$.  Thus the test has power at least $1-1/a^2$ against alternatives where
\beq
m^2 \|p-q\|^2 - 2 m\eta - a \sqrt{8 m^2 \eta + 16 m^3 \eta \|p-q\|^2}
\ge a m \sqrt{\eta}.
\eeq
This shows that it is enough that
\beq
m^2 \|p-q\|^2 \ge 4 \Big(2 m\eta \vee a \sqrt{8 m^2 \eta} \vee a \sqrt{16 m^3 \eta \|p-q\|^2} \vee a m \sqrt{\eta}\Big).
\eeq
From this we conclude, using the fact that $\eta \le 1$ and $a \ge 1$.
\end{proof}

\subsection{Proof of \thmref{chi2}}
\label{sec:Gamma}
Since we are dealing with continuous distributions we have to take the approximation error due to binning into account.  
Leaving $\kappa$ implicit, define
\beq\label{approx-def}
H_k = [(k-1)/\kappa, k/\kappa], \quad 
p_k = \int_{H_k} f(x) {\rm d}x, \quad 
q_k = \int_{H_k} g(x) {\rm d}x.
\eeq
By definition, for any $h \in \cH_s^d(L)$, $\|h\|_\infty \le L$.
%and 
%\beq\label{modulus}
%|h(x) - h(y)| \le L |x-y|^{s \wedge 1}, \quad \forall x,y \in [0,1]^d.
%\eeq
Using this, we get
%\beq\begin{split}
%p_k 
%&\le f(k/\kappa) \kappa^{-d} + L \sqrt{d} \kappa^{-d - s \wedge 1} \\
%&\le L \kappa^{-d} + L \sqrt{d} \kappa^{-d - s \wedge 1}
%\le \eta := 2 L \kappa^{-d}, \quad\text{for $\kappa$ large enough},
%\end{split}\eeq
\begin{equation}\label{eta}
p_k \leq \|f\|_\infty \kappa^{-d} \leq \eta := L\kappa^{-d}, \quad \forall k,
\end{equation}
and similarly for $q_k$.  
Define $p = (p_k)$ and $q = (q_k)$.  
By \corref{chi-cheb} (with $v_1$ and $v_2$ defined there), the test under consideration has risk at most $2v_1 L/a^2$ when \eqref{cor-chi-cheb} holds under the alternative.  We now examine when  \eqref{cor-chi-cheb} holds.  

The following is an extension of \cite[Eq (15)]{ingster1987minimax} to the setting of dimension $d$.  
The proof is provided in \secref{proof-approx}.

\begin{lem} \label{lem:approx}
For a continuous function $h : [0,1]^d \mapsto \bbR$ and an integer $\kappa \ge 2$, define $W_\kappa[h] = \sum_k \kappa^d (\int_{H_{k, \kappa}} h)\bbI_{H_{k, \kappa}}$, where $H_{k, \kappa} = [(k-1)/\kappa, k/\kappa]$. 
There are constants $b_1, b_2  > 0$ depending only on $(d,s,L)$ such that
\beq
\|W_\kappa[h]\|_2 \ge b_1 \|h\|_2 - b_2 \kappa^{-s}, \quad \forall 
h \in \cH_s^d(L). 
\eeq
\end{lem}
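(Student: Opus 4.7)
The plan is to apply the reverse triangle inequality $\|W_\kappa[h]\|_2 \ge \|h\|_2 - \|h - W_\kappa[h]\|_2$ after bounding the approximation error $\|h - W_\kappa[h]\|_2$ by $b_2\kappa^{-s}$ using the H\"older smoothness of $h$; this immediately yields the lemma with $b_1 = 1$ and $b_2$ equal to the constant in the approximation bound.

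The starting observation is that $W_\kappa[h]$ is the $L^2$-projection of $h$ onto the space of bin-wise constant functions: on $H_{k,\kappa}$ it equals the bin average $\bar h_k := \kappa^d \int_{H_{k,\kappa}} h$, so that
\beq
h(x) - W_\kappa[h](x) = \kappa^d \int_{H_{k,\kappa}} \bigl( h(x) - h(y) \bigr) \, {\rm d}y, \quad x \in H_{k,\kappa}.
\eeq
For $s \le 1$, the H\"older bound $|h(x) - h(y)| \le L\|x-y\|^s$ combined with the elementary estimate $\|x - y\| \le \sqrt{d}/\kappa$ (for any two points in a common bin) gives the uniform pointwise inequality $|h(x) - W_\kappa[h](x)| \le L d^{s/2} \kappa^{-s}$. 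Squaring and integrating over $[0,1]^d$ (whose total volume is $1$) then yields $\|h - W_\kappa[h]\|_2 \le L d^{s/2} \kappa^{-s}$, and the reverse triangle inequality closes the argument with $b_2 = L d^{s/2}$. This is the direct extension of \cite[Eq.~(15)]{ingster1987minimax} to dimension $d$.

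The main obstacle lies in the regime $s > 1$, where \eqref{H} only controls the modulus of $h^{(\lfloor s \rfloor)}$, so the naive bound on $h(x) - h(y)$ is of order $\|x-y\|$ (from \eqref{H-deriv}), i.e., $O(\kappa^{-1})$ rather than $O(\kappa^{-s})$. Here one would Taylor-expand $h$ about each bin center $c_k$ up to order $\lfloor s \rfloor$, writing $h(x) = P_k(x) + R_k(x)$, with $|R_k(x)| \le C\|x-c_k\|^s$ coming from the H\"older condition on $h^{(\lfloor s \rfloor)}$, so that $R_k$ contributes $O(\kappa^{-s})$ in $L^2$ by the same integration. The delicate point is controlling the polynomial fluctuation $P_k - \bar P_k$: bin symmetry kills the odd-order moments against the bin-average, but the linear piece $\nabla h(c_k)\cdot(x-c_k)$ is pointwise $O(\kappa^{-1})$ on each bin. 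Combining the uniform derivative bounds \eqref{H-deriv} with careful aggregation across the $\kappa^d$ bins --- possibly at the cost of a reduced constant $b_1 < 1$ --- to bring this contribution down to the stated $\kappa^{-s}$ scaling is the main technical step of the proof.
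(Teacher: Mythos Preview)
Your argument for $s \le 1$ is fine, and indeed matches the paper's own logic in that regime. The gap is in the case $s > 1$, where your sketch does not close.

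The claim $\|h - W_\kappa[h]\|_2 = O(\kappa^{-s})$ is simply false for $s > 1$: take $d = 1$, $s = 2$, and $h(x) = c\sin(2\pi x)$ with $c$ small enough that $h \in \cH_2^1(L)$. Then $h - W_\kappa[h]$ is essentially a sawtooth of amplitude $\asymp \kappa^{-1}$ on each bin, so $\|h - W_\kappa[h]\|_2 \asymp \kappa^{-1}$, not $\kappa^{-2}$. No amount of ``careful aggregation across the $\kappa^d$ bins'' changes this; the linear piece contributes $\Theta(\kappa^{-1})$ to the global $L^2$ error, period. So the reverse triangle inequality with $b_1 = 1$ cannot give the lemma, and your final paragraph does not explain what other mechanism would allow $b_1 < 1$ to absorb the polynomial part.

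The missing idea, and what the paper does, is this: after Taylor-expanding $h$ to get a piecewise polynomial $u$ (of degree $\lfloor s\rfloor$, on a slightly coarser grid) with $\|h - u\|_\infty \le c_1\kappa^{-s}$, one does \emph{not} try to bound $\|u - W_\kappa[u]\|_2$. Instead one proves a separate comparison lemma: for any polynomial $v$ of degree at most $\lfloor s\rfloor$ on $[0,1]^d$, and any sufficiently fine partition $\cQ$, one has $\|W_\cQ[v]\|_2 \ge c_2 \|v\|_2$ for a constant $c_2 > 0$ depending only on $d$ and the degree. This is established by a compactness argument on the finite-dimensional unit sphere $\{v \in \cP^d_{\lfloor s\rfloor} : \|v\|_2 = 1\}$. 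Applying it bin-by-bin to the piecewise polynomial $u$ gives $\|W_\kappa[u]\|_2 \ge c_2 \|u\|_2$, and then two applications of the triangle inequality (through $u$) yield $\|W_\kappa[h]\|_2 \ge c_2\|h\|_2 - O(\kappa^{-s})$ with $b_1 = c_2$. That compactness step is the heart of the proof for $s > 1$, and it is absent from your proposal.
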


Applying \lemref{approx}, we obtain
\beq
\|p - q\|^2 = \kappa^{-d} \|W_\kappa[f] - W_\kappa[g]\|_2^2 \ge \kappa^{-d} \big(b^*_1 \|f -g\|_2 - b^*_2 \kappa^{-s}\big)^2,
\eeq
where $b^*_1 := b_1(d,s,2L)$ and $b^*_2 := b_2(d,s,2L)$, using the fact that $f-g \in \cH_s^d(2L)$ when $f, g \in \cH_s^d(L)$.

In our setting, recall that $\|f-g\|_2 \ge \eps$.  Because of our choice of $\kappa$ in \eqref{kappa}, taking the constant $c_2$ in \eqref{chi2-1} large enough, we may enforce $\eps \ge 2 (b^*_2/b^*_1) \kappa^{-s}$, and thus obtain the lower bound
\beq\label{p-q}
\|p - q\|^2 \ge c_3 \kappa^{-d} \eps^2,
\eeq
for a constant $c_3 > 0$ depending on $(s,d,L)$.
Condition \eqref{cor-chi-cheb} holds when 
\beq
c_3 \kappa^{-d} \eps^2 \ge v_2 a^2 \sqrt{\eta}/m,
\eeq
which, recalling that $\eta = L\kappa^{-d}$, is the case when 
\beq
\eps^2 \ge c_4 a^2 \kappa^{d/2}/m,
\eeq
for a constant $c_4 > 0$ depending on $(s,d,L)$.
Plugging-in the value for $\kappa$ from \eqref{kappa}, we conclude.

%\begin{rem}
%We proved that the risk is bounded by $2/a^2$, but by increasing $c_1$ and $c_2$ a little bit --- which are unspecified in the statement --- we can bound the risk by $1/a^2$ as stated.
%\end{rem}

\subsection{Proof of \lemref{approx}}
\label{sec:proof-approx}
The following is well-known.
\begin{lem} \label{lem:taylor}
Fix any $h \in \cH_s^d(L)$ and any $x_0 \in [0,1]^d$.  Let $u$ denote the $\lfloor s \rfloor$-order Taylor series of $h$ at $x_0$.  There is a constant $L'$ depending only on $(d,s,L)$ such that
\beq
|h(x) - u(x)| \le L' \|x - x_0\|^s, \quad \forall x \in [0,1]^d.
\eeq
\end{lem}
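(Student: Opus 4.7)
The plan is a direct calculation via multivariate Taylor's theorem with integral remainder, followed by an application of the Hölder condition \eqref{H} to control the remainder. Setting $k := \lfloor s \rfloor$ and $\alpha := s - k \in (0,1]$, I would first dispose of the case $k = 0$: there the $k$-th order Taylor polynomial reduces to $u(x) = h(x_0)$, and the conclusion is exactly the Hölder hypothesis \eqref{H} applied to $h$ itself, with $L' = L$.

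For $k \ge 1$, on the convex domain $[0,1]^d$ Taylor's theorem with integral remainder along the segment from $x_0$ to $x$ gives, after using the identity $\int_0^1 k(1-t)^{k-1}\,{\rm d}t = 1$ to rewrite the order-$k$ terms of $u$ in the same integral form,
\[
h(x) - u(x) = \sum_{|\beta|=k} \frac{k}{\beta!}(x-x_0)^\beta \int_0^1 (1-t)^{k-1} \bigl[D^\beta h(x_0 + t(x-x_0)) - D^\beta h(x_0)\bigr]\, {\rm d}t.
\]
This reduces the claim to bounding the integrand.

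The Hölder condition \eqref{H}, read coordinatewise on each partial derivative of order $k$, gives $|D^\beta h(x_0 + t(x-x_0)) - D^\beta h(x_0)| \le L\, t^\alpha \|x - x_0\|^\alpha$, and trivially $|(x-x_0)^\beta| \le \|x - x_0\|^k$ since $|\beta| = k$. The remaining time integral $\int_0^1 (1-t)^{k-1} t^\alpha\, {\rm d}t = B(k, \alpha+1)$ is a Beta-function value bounded by $1/k$, and the multi-index sum $\sum_{|\beta|=k} k/\beta!$ is a constant depending only on $(d, k)$. Multiplying these estimates yields $|h(x) - u(x)| \le L' \|x - x_0\|^{k+\alpha} = L' \|x - x_0\|^s$ with $L'$ depending only on $(d, s, L)$.

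There is no serious obstacle here; this is bookkeeping atop standard multivariate Taylor analysis. The only subtlety worth flagging is the reading of hypothesis \eqref{H}: the bound on ``$f^{\lfloor s \rfloor}$'' must be understood as controlling every partial derivative of order $k$ (possibly after inflating $L$ by a $(d,k)$-dependent factor to convert between any tensor/operator norm and the coordinatewise maximum), which is the convention implicitly adopted elsewhere in the paper.
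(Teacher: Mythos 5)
Your proof is correct. The paper offers no argument for this lemma (it is simply declared well-known), and what you give is precisely the standard justification the authors are implicitly invoking: Taylor's theorem with integral remainder along the segment from $x_0$ to $x$ (valid since $[0,1]^d$ is convex and the order-$\lfloor s\rfloor$ derivatives are continuous by \eqref{H}), the H\"older condition applied to the increment of the top-order derivatives, and the degenerate case $\lfloor s\rfloor=0$ handled directly; the caveat you flag — that the bound in \eqref{H} must be read as controlling each partial derivative of order $\lfloor s\rfloor$, at the cost of inflating the constant by a factor depending only on $(d,\lfloor s\rfloor)$ — is the right one and is absorbed into $L'$ exactly as you say.
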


Fix $h \in \cH_s^d(L)$ and an integer $r \ge 1$ to be chosen large enough, but fixed, later on.  Let $u_j$ be the $\lfloor s \rfloor$-order Taylor series of $h$ at $(j-1)r/\kappa$.  Assume for convenience that $\kappa/r$ is an integer.
For a positive integer $j_0 \le \kappa/r$, let $\tilde H_{j_0}$ denote $[(j_0-1)r/\kappa, j_0 r/\kappa]$, and for $j = (j_1, \dots, j_d)$, with some abuse of notation, define $\tilde H_j = \prod_{l=1}^d \tilde H_{j_l}$.  Define $u = \sum_j u_j \bbI_{\tilde H_j}$.  By \lemref{taylor}, we have
\beq
|h(x) - u(x)| \le L' (\sqrt{d} 2r/\kappa)^s =: a_1 \kappa^{-s}, \quad \forall x \in [0,1]^d,
\eeq
where $a_1$ does not depend on $\kappa$.  (We used the fact that $\diam(\tilde H_j) \le \sqrt{d} 2r/\kappa$.)
With this and the triangle inequality, we have
\beq\label{W-lb}\begin{split}
\|W_\kappa[h]\|_2 
&\ge \|W_\kappa[u]\|_2 -\|W_\kappa[u] - W_\kappa[h]\|_2 \\
&\ge \|W_\kappa[u]\|_2 -\|u - h\|_2 \\
&\ge \|W_\kappa[u]\|_2 -a_1 \kappa^{-s}.
\end{split}\eeq

We need the following.

\begin{lem}\label{lem:proj}
Let $\cP^d_p$ denote the class of polynomials on $\bbR^d$ of degree at most $p$.  Let $\cQ = (Q_j)$ be a discrete partition of $[0,1]^d$ such that $|Q_j| > 0$ for all $j$ and define $W_\cQ[v] = \sum_j \frac1{|Q_j|}(\int_{Q_j} v) \bbI_{Q_j}$ for any continuous function $v: [0,1]^d \to \bbR$.  There is a constant $a_2 > 0$ depending only on $(d, p)$ such that, when $\diam(\cQ) := \max_j \diam(Q_j) \le a_2$, 
\beq
\|W_\cQ(v)\|_2 \ge a_2 \|v\|_2, \quad \forall v \in \cP^d_p.
\eeq
\end{lem}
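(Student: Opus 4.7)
The plan is to interpret $W_\cQ$ as the $L^2([0,1]^d)$-orthogonal projection onto the (closed) subspace of functions constant on each cell $Q_j$. The Pythagorean identity then gives
\beq
\|W_\cQ[v]\|_2^2 = \|v\|_2^2 - \|v - W_\cQ[v]\|_2^2,
\eeq
so the desired lower bound will follow from an upper bound of the form $\|v - W_\cQ[v]\|_2 \leq C(d,m)\, \delta(\cQ)\, \|v\|_2$ for $v \in \cP^d_m$. Granted such a bound, I would take $c_2 := \min\{1/2,\, 1/(2C(d,m))\}$, so that the hypothesis $\delta(\cQ) \leq c_2$ automatically yields $\|W_\cQ[v]\|_2 \geq (1/2)\|v\|_2 \geq c_2 \|v\|_2$ and both roles of $c_2$ are satisfied.

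To establish the upper bound I would start from the cellwise pointwise estimate. For $x \in Q_j$, write
\beq
v(x) - (W_\cQ[v])(x) = \frac{1}{|Q_j|}\int_{Q_j}\bigl(v(x) - v(z)\bigr)\,{\rm d}z,
\eeq
and apply the mean value inequality to $v$ (which is smooth, being a polynomial) to bound $|v(x)-v(z)|$ by $\|\nabla v\|_\infty \|x-z\| \leq \|\nabla v\|_\infty\, \delta(\cQ)$ for $x,z \in Q_j$. Squaring, integrating over $Q_j$, and summing over $j$ (using that $\{Q_j\}$ partitions $[0,1]^d$ so $\sum_j |Q_j| = 1$) yields $\|v - W_\cQ[v]\|_2 \leq \delta(\cQ)\, \|\nabla v\|_\infty$.

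It remains to convert $\|\nabla v\|_\infty$ into a multiple of $\|v\|_2$. Here I would invoke the equivalence of norms on the finite-dimensional space $\cP^d_m$: the map $v \mapsto \|v\|_2$ is a genuine norm on $\cP^d_m$ (a polynomial vanishing on $[0,1]^d$, a set of positive Lebesgue measure, must be identically zero), while $v \mapsto \|\nabla v\|_\infty$ is a continuous seminorm; compactness of the $L^2$-unit sphere in $\cP^d_m$ then furnishes a constant $C_0 = C_0(d,m)$ with $\|\nabla v\|_\infty \leq C_0 \|v\|_2$ for all $v \in \cP^d_m$. Setting $C(d,m) := C_0$ and combining with the previous paragraph closes the argument.

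The argument is essentially routine; the only point worth flagging is that every constant must depend only on $(d,m)$ and not on the specific partition $\cQ$, which is automatic because the partition enters solely through the scalar $\delta(\cQ)$, and the compactness step depends only on $d$ and $m$. No sharp geometric analysis of the $Q_j$ is needed, which is what makes the result work for arbitrary partitions with small enough mesh.
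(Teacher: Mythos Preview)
Your proof is correct and takes a different, more constructive route than the paper's. The paper argues by contradiction: if the conclusion failed, there would exist partitions $\cQ_l$ with $\delta(\cQ_l) \le 1/l$ and unit-norm polynomials $v_l \in \cP^d_m$ with $\|W_{\cQ_l}(v_l)\|_2 < 1/l$; compactness of the unit sphere in the finite-dimensional space $\cP^d_m$ yields an accumulation point $v_\infty$, and one then shows simultaneously that $\|W_{\cQ_l}(v_\infty)\|_2 \to 0$ (from the hypothesis together with the contraction property $\|W_{\cQ_l}(\cdot)\|_2 \le \|\cdot\|_2$) and that $\|W_{\cQ_l}(v_\infty)\|_2 \to \|v_\infty\|_2 = 1$ (by dominated convergence, using continuity of $v_\infty$), a contradiction. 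Your argument instead produces the direct quantitative estimate $\|v - W_\cQ[v]\|_2 \le C_0(d,m)\,\delta(\cQ)\,\|v\|_2$ via a cellwise mean-value bound combined with norm equivalence on $\cP^d_m$, and then reads off the conclusion from the Pythagorean identity for the orthogonal projection $W_\cQ$. Both proofs ultimately hinge on the finite-dimensionality of $\cP^d_m$ (compactness of the unit sphere), but your version delivers an explicit rate in $\delta(\cQ)$ and sidesteps the soft convergence step, at the modest cost of first recognizing $W_\cQ$ as an orthogonal projection.
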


\begin{proof}
Assume for contradiction that there is an sequence $\cQ^l = (Q_j^l)$ of partitions of $[0,1]^d$ such that, for all $l \ge 1$, $\diam(\cQ^l) \le 1/l$ and there is some $v_l \in \cP^d_p$ such that $\|W_{\cQ^l}(v_l)\|_2 < (1/l) \|v_l\|_2$.  Without loss of generality because of homogeneity, we may assume that $\|v_l\|_2 = 1$.  In that case, by compactness of $\{v \in \cP^d_p : \|v\|_2 = 1\}$, the sequence $(v_l)$ has at least one accumulation point.\footnote{ The topology induced by the $L^2([0,1]^d)$ on $\cP_p^d$ coincides with its topology as a finite vector space.}
Choose one of them, denoted $v_\infty \in \cP^d_p$.  
On the one hand, $\|v_\infty\|_2 =1$.  
On the other hand, 
\beq\begin{split}
\|W_{\cQ^l}(v_\infty)\|_2 
&\le \|W_{\cQ^l}(v_\infty - v_l)\|_2 + \|W_{\cQ^l}(v_l)\|_2 \\
&\le \|v_\infty - v_l\|_2 + 1/l \to 0, \quad l \to \infty,
\end{split}\eeq
and
\beq
\|W_{\cQ^l}(v_\infty)\|_2 \to \|v_\infty\|_2, \quad l \to \infty, 
\eeq
by dominated converge, and together this implies that $\|v_\infty\|_2 = 0$.
We thus have a contradiction.
\end{proof}

Coming back to \eqref{W-lb}, since $W_\kappa[u] = \sum_j W_\kappa[u_j \bbI_{\tilde H_j}]$ with the functions on the RHS having non-overlapping supports, we have
\beq
\|W_\kappa[u]\|_2^2 = \sum_j \|W_\kappa[u_j \bbI_{\tilde H_j}]\|_2^2.
\eeq
By \lemref{proj}, after translating and rescaling $\tilde H_j$ into $[0,1]^d$, we have $\|W_\kappa[u_j \bbI_{\tilde H_j}]\|_2 \ge a_2 \|u_j \bbI_{\tilde H_j}\|_2$ as soon as $r$ is large enough (independent of $\kappa$).  Here $a_2$ is the constant of \lemref{proj} corresponding to $(d, \lfloor s \rfloor)$ and we have used the fact that $u_j$ is a polynomial on $\tilde H_j$ of degree at most $\lfloor s \rfloor$.  
We fix such an $r$ henceforth, and derive
\beq
\|W_\kappa[u]\|_2^2 \ge \sum_j a_2^2 \|u_j \bbI_{\tilde H_j}\|_2^2 = a_2^2 \|u\|_2^2.
\eeq
And, as before, 
\beq
\|u\|_2 \ge \|h\|_2 - \|h -u\|_2 \ge \|h\|_2 - a_1 \kappa^{-s}.
\eeq
Thus, 
\beq\begin{split}
\|W_\kappa[h]\|_2 
&\ge a_2 (\|h\|_2 - a_1 \kappa^{-s}) -a_1 \kappa^{-s} \\
& \ge a_2 \|h\|_2 - (2 a_2 +1) a_1 \kappa^{-s},
\end{split}\eeq
concluding the proof of \lemref{approx}.

\subsection{Proof of \prpref{adapt}}
We elaborate on the arguments laid out in \secref{Gamma}.  
By \corref{chi-cheb} (with $v_1$ and $v_2$ defined there), the test with bin size $1/\kappa$ has (statistical) size at most $v_1 L/(a \sqrt{\log m})^2$, so by the union bound, the combined test \eqref{adapt} has size at most $b_{\rm max} v_1 L/(a^2 \log m) \le c_1/a^2$, for a constant $c_1$ that depends only on $(d,L)$.

Now fix an alternative, meaning with $f \ne g$ both in $\cH_d^s(L)$ with such that $\|f - g\|_2 \ge \eps$.  Let $b$ be the integer defined by $2^{b-1} < \kappa_s \le 2^b$, where $\kappa_s = \kappa(s,d)$ is defined in \eqref{kappa}.  Note that (eventually) $1 \le b \le b_{\rm max}$ by the fact that $b -1 < \log_2 (\kappa_s) \le b$ with $\log_2 (\kappa_s) = \frac2{4s+d} \log_2 m$ and $\frac2{4s+d} < \frac2d$.
We then let $\kappa = 2^b$ and consider the test based on $\Gamma_\kappa$.  The same analysis carried out in \secref{Gamma} applies as such if in place of \eqref{eta} we use the fact that $p_k \le L\kappa_s^{-d} \le 2^{-d} L \kappa^{-d}$, because $\kappa/2 \le \kappa_s \le \kappa$.

\subsection*{Acknowledgments}
We are grateful to Yannick Baraud for helpful discussions and to Aaditya Ramdas for alerting us about his work on the topic \cite{ramdas2015adaptivity, ramdas2015decreasing, reddi2015high}.  
We also acknowledge the contributions of two anonymous reviewers.  
This work was partially supported by the US National Science Foundation (DMS 1223137, DMS 1513465, and CCF 1320547).

\bibliographystyle{abbrvnat}
\bibliography{gof-graph}

\end{document}